\DeclareMathOperator{\Int}{int}
\newcommand{\cX}{\mathcal{X}}
\newcommand{\cY}{\mathcal{Y}}
\newcommand{\cH}{\mathcal{H}}
\newcommand{\cG}{\mathcal{G}}
\newcommand{\cS}{\mathcal{S}}
\DeclareMathOperator{\Exp}{\mathbb{E}}
\DeclareMathOperator{\Prob}{\mathbb{P}}
\newcommand{\D}{\mathrm{d}}
\newcommand{\E}{\mathrm{e}}
\newcommand{\df}{\triangleq}
\newcommand{\transp}{^{\mathsf{T}}}
\newcommand{\Uadm}{\mathfrak{U}}
\newcommand{\Usr}{\mathfrak{U}_{\mathrm{SM}}}
\newcommand{\Ussr}{\mathfrak{U}_{\mathrm{SSM}}}
\newcommand{\Lg}{L}
\newcommand{\sF}{\mathfrak{F}}
\newcommand{\Ind}{\mathbb{I}}
\newcommand{\Cc}{\mathcal{C}}
\newcommand{\Lp}{\mathcal{L}}
\newcommand{\Sob}{\mathscr{W}}
\newcommand{\Sobl}{\mathscr{W}_{\mathrm{loc}}}
\newcommand{\Act}{\mathbb{U}}
\newcommand{\imeas}{\mu}
\newcommand{\Lyap}{\mathcal{V}}
\newcommand{\RR}{\mathbb{R}}
\newcommand{\NN}{\mathbb{N}}
\newcommand{\abs}[1]{\lvert#1\rvert}
\newcommand{\babs}[1]{\bigl\lvert#1\bigr\rvert}
\newcommand{\norm}[1]{\lVert#1\rVert}
\newcommand{\bnorm}[1]{\bigl\lVert#1\bigr\rVert}
\newtheorem{remark}[theorem]{{\it Remark}}
\newtheorem{assumption}[theorem]{{\it Assumption}}
\title
{A Relative Value Iteration Algorithm for\\ Non-degenerate
Controlled Diffusions}
\author{Ari Arapostathis\thanks{Department of Electrical and Computer
Engineering, The University of Texas at Austin, 1 University Station,
Austin, TX 78712 (ari@mail.utexas.edu).
This author's  work was supported in part by the Office of Naval Research
through the Electric Ship Research and Development Consortium.}
\and Vivek S. Borkar\thanks{Department of Electrical Engineering,
Indian Institute of Technology, Powai, Mumbai 400076, India.
(borkar.vs@gmail.com). This author's work was supported in part by the 
J.~C.~Bose Fellowship from the Government of India.}}
\begin{document}
\maketitle

\begin{abstract}
The ergodic control problem for a non-degenerate controlled diffusion
controlled through its drift is considered under a uniform stability
condition that ensures the well-posedness of the associated
Hamilton--Jacobi--Bellman (HJB) equation.
A nonlinear parabolic evolution equation is then proposed as a
continuous time continuous state space analog of White's
`\textit{relative value iteration}' algorithm for solving the ergodic
dynamic programming equation for the finite state finite action case.
Its convergence to the solution of the HJB equation is established using
the theory of monotone dynamical systems and also, alternatively,
by using the theory of reverse martingales.
\end{abstract}

\begin{keywords}
controlled diffusions; ergodic control; Hamilton--Jacobi--Bellman equation;
relative value iteration; monotone dynamical systems; reverse martingales
\end{keywords}

\begin{AMS}
Primary, 93E15, 93E20; Secondary, 60J25, 60J60, 90C40
\end{AMS}

\pagestyle{myheadings}
\thispagestyle{plain}
\markboth{ARI ARAPOSTATHIS AND VIVEK S. BORKAR}
{RELATIVE VALUE ITERATION FOR CONTROLLED DIFFUSIONS}

\section{Introduction}\label{S-intro}

Consider a controlled Markov chain on a finite state space
$\cS=\{1,\dotsc,N\}$
with transition probabilities $p_{ij}(u)$, $i,j\in\cS$,
which depend continuously on a control parameter $u$ that lives in
a compact `action' space $\Act$,  such
that when in state $i$ the control $u$ is chosen from a compact
subset $\Act_{i}\subset\Act$.
Assuming irreducibility for the stochastic matrix
$P^{v}\df \left[p_{i,j}(v_{i})\right]_{i,j\in\cS}$
with $v=(v_{1},\dotsc,v_{N})\in (\Act_{1}\times\dotsb\times\Act_{N})$,
consider the control problem
of minimizing the average (or ergodic) cost
\begin{equation*}
\limsup_{n\uparrow\infty}\;\frac{1}{n}\;\sum_{k=0}^{n-1}
\Exp\left[r(X_{k},U_{k})\right]
\end{equation*}
for a prescribed $r:\cS\times\Act\to\RR$ and control
sequence $\{U_{k}\}$ such that $U_{k}\in \Act_{X_{k}}$ and
\begin{equation*}
\Prob(X_{n+1} = j \mid X_{m}\,,U_{m}\,,~ m\le n)
= p_{X_{n}\,j} (U_{n})\,,\quad n\ge0\,.
\end{equation*}
The dynamic programming equation for this problem is the well known
controlled Poisson equation:
\begin{equation*}
V(i) = \min_{u\in\Act_{i}}\;\left[r(i,u) - \beta + \sum_{j\in\cS}
p_{ij}(u) V(j)\right]\,,\quad i\in\cS\,.
\end{equation*}
This is an equation in unknowns $(V,\beta)$, with
$V=\bigl(V(1),\dotsc,V(N)\bigr)\in\RR^{N}$
the so called value function.
Under the irreducibility hypothesis above, $V$ is uniquely specified
modulo an additive constant and $\beta$ is uniquely specified as the
optimal ergodic cost.
See \cite{DynYus-79,Puterman} for details.

By analogy with the value iteration algorithm for the discounted cost
problem, one may consider the value iteration algorithm
\begin{equation}\label{E-1.1}
V^{n+1}(i)  = \min_{u\in\Act_{i}}\; \left[ r(i,u) -\beta + \sum_{j\in\cS}
p_{ij}(u) V^{n}(j)\right]\,,\quad i\in\cS\,,
\end{equation}
beginning with an initial guess $V^{0}(\cdot)$.
The difficulty here is that $\beta$ is unknown as well.
On the other hand, if we drop $\beta$ from \eqref{E-1.1}, there is
no convergence---the map $V^{n}\mapsto V^{n+1} \df F(V^{n})$ that is
being iterated lacks the contractivity property of its discounted cost
counterpart.
Thus clearly some renormalization is required.
The earliest example of such a \emph{relative value iteration} algorithm for
finite state Markov chains is perhaps that of White \cite{White-63},
which is governed by
\begin{subequations}
\begin{align}
h_{k+1}(i) &= \min_{u\in\Act_{i}}\;
\Biggl[r(i,u) +\sum_{j=1}^{n} p_{ij}(u) h_{k}(j)\Biggr]
- \lambda_{k+1}\label{E-Whitea}\\[5pt]
\lambda_{k+1} &= \min_{u\in\Act_{n}}\;
\left[ r(n,u) +\sum_{j=1}^{n} p_{nj}(u) h_{k}(j)\right]\,.\label{E-Whiteb}
\end{align}
\end{subequations}
For a discussion of other possible choices for updating \eqref{E-Whiteb}
see \cite{Abounadi-01}.

Bertsekas introduced in \cite{Bertsekas-98} a variation of this method
that takes the form
\begin{align*}
h_{k+1}(i) &= \min_{u\in\Act_{i}}\;
\Biggl[r(i,u) +\sum_{j=1}^{n-1} p_{ij}(u) h_{k}(j)\Biggr]
- \lambda_{k}\\[5pt]
\lambda_{k+1} &= \lambda_{k}+ \gamma_{k} h_{k+1}(n)\,.
\end{align*}
Here $\{\gamma_{k}\}$ is a sequence of positive stepsizes.
This has led to the learning algorithms analyzed in \cite{Abounadi-01}.
Recently Shlakhter et.\ al.\ \cite{Shlakhter-10} have studied ways
of accelerating the convergence of the above value iteration algorithms.

Studies of convergence of relative value iteration schemes for
more general Markov processes are non-existent.
The only related work that comes to mind is convergence
of the value iteration in \eqref{E-1.1}
for denumerable controlled Markov chains \cite{Aviv-99}.

Our aim in this paper is to propose a relative value iteration scheme
in continuous time and space for a class of controlled diffusion processes
and prove its convergence.
While we prefer to think of this scheme as a continuous time and space
relative value iteration, it can also be viewed as a `stabilization
of a nonlinear parabolic PDE problem in the sense of Has$'$minski\u{\i}
(see \cite{Hasm-60}).
We follow two different approaches for the proof of convergence, based on
resp.\ the theory of monotone dynamical systems and the theory
of reverse martingales.
These should be of independent interest.

The paper is organized as follows.
The next section describes the ergodic control problem for diffusions
and the associated Hamilton--Jacobi--Bellman equation, leading to the proposed
relative value iteration scheme.
Section~\ref{S3} provides a motivating illustration from the discrete state
counterpart, introduces some notation, and recalls some key results from
parabolic PDEs and monotone dynamical systems for later use.
Section~\ref{S-main} gives the two convergence proofs alluded in the Abstract,
while Section~\ref{S-concl} concludes with some pointers to future work.

\section{Problem statement}\label{S2}

\subsection{The model}

We are concerned with
controlled diffusion processes $X = \{X_{t},\;t\ge0\}$
taking values in the $d$-dimensional Euclidean space $\RR^{d}$, and
governed by the It\^o stochastic differential equation
\begin{equation}\label{e-sde}
\D{X}_{t} = b(X_{t},U_{t})\,\D{t} + \upsigma(X_{t})\,\D{W}_{t}\,.
\end{equation}
All random processes in \eqref{e-sde} live in a complete
probability space $(\Omega, \sF,\Prob)$.
The process $W$ is a $d$-dimensional standard Wiener process independent
of the initial condition $X_{0}$.
The control process $U$ takes values in a compact, metrizable set $\Act$, and
$U_{t}(\omega)$ is jointly measurable in
$(t, \omega)\in[0,\infty)\times\Omega$.
Moreover, it is \emph{non-anticipative}:
for $s < t$, $W_{t} - W_{s}$ is independent of
\begin{equation*}
\sF_{s} \df \text{the completion of~} \sigma\{X_{0}, U_{r}, W_{r},\; r\le s\}
\text{~relative to~} (\sF,\Prob)\,.
\end{equation*}
Such a process $U$ is called an \emph{admissible control},
and we let $\Uadm$ denote the set of all admissible controls.

We impose the following standard assumptions on the drift $b$
and the diffusion matrix $\upsigma$
to guarantee existence and uniqueness of solutions to \eqref{e-sde}.
\begin{description}
\item[(A1)]
\textit{Local Lipschitz continuity:\/}
The functions
\begin{equation*}
b=\bigl[b^{1},\dotsc,b^{d}\bigr]\transp :\RR^{d} \times\Act\mapsto\RR^{d}
\quad\text{and}\quad
\upsigma=\bigl[\upsigma^{ij}\bigr]:\RR^{d}\mapsto\RR^{d\times d}
\end{equation*}
are locally Lipschitz in $x$ with a Lipschitz constant $\kappa_{R}$ depending on
$R>0$.
In other words, if $B_{R}$ denotes the open ball of radius $R$ centered
at the origin in $\RR^{d}$, then
for all $x,y\in B_{R}$ and $u\in\Act$,
\begin{equation*}
\abs{b(x,u) - b(y,u)} + \norm{\upsigma(x) - \upsigma(y)}
\le \kappa_{R}\abs{x-y}\,,
\end{equation*}
where $\norm{\upsigma}^{2}\df
\mathrm{trace}\left(\upsigma\upsigma\transp\right)$.
\item[(A2)]
\textit{Affine growth condition:\/}
$b$ and $\upsigma$ satisfy a global growth condition of the form
\begin{equation*}
\abs{b(x,u)}^{2}+ \norm{\upsigma(x)}^{2}\le \kappa_{1}
\bigl(1 + \abs{x}^{2}\bigr) \qquad \forall (x,u)\in\RR^{d}\times\Act\,.
\end{equation*}
\item[(A3)]
\textit{Local non-degeneracy:\/}
Let $a \df \frac{1}{2}\upsigma\,\upsigma\transp$.
For each $R>0$, we have
\begin{equation*}
\sum_{i,j=1}^{d} a^{ij}(x)\xi_{i}\xi_{j}
\ge\kappa^{-1}_{R}\abs{\xi}^{2} \qquad\forall x\in B_{R}\,,
\end{equation*}
for all $\xi=(\xi_{1},\dotsc,\xi_{d})\in\RR^{d}$.
\end{description}
We also assume that $b$ is continuous in $(x,u)$.

In integral form, \eqref{e-sde} is written as
\begin{equation}\label{E2}
X_{t} = X_{0} + \int_{0}^{t} b(X_{s},U_{s})\,\D{s}
+ \int_{0}^{t} \upsigma(X_{s})\,\D{W}_{s}\,.
\end{equation}
The second term on the right hand side of \eqref{E2} is an It\^o
stochastic integral.
We say that a process $X=\{X_{t}(\omega)\}$ is a solution of \eqref{e-sde},
if it is $\sF_{t}$-adapted, continuous in $t$, defined for all
$\omega\in\Omega$ and $t\in[0,\infty)$, and satisfies \eqref{E2} for
all $t\in[0,\infty)$ at once a.s.

With $u\in\Act$ treated as a parameter, we define
the family of operators $\Lg^{u}:\Cc^{2}(\RR^{d})\mapsto\Cc(\RR^{d})$ by
\begin{equation}\label{E3}
\Lg^{u} f(x) = \sum_{i,j} a^{ij}(x)
\frac{\partial^{2}f}{\partial x_{i} \partial x_{j}} (x)
+\sum_{i} b^{i} (x,u) \frac{\partial f}{\partial x_{i}} (x)\,,\quad u\in\Act\,.
\end{equation}
We refer to $\Lg^{u}$ as the \emph{controlled extended generator} of
the diffusion.

Of fundamental importance in the study of functionals of $X$ is
It\^o's formula.
For $f\in\Cc^{2}(\RR^{d})$ and with $\Lg^{u}$ as defined in \eqref{E3},
\begin{equation}\label{E4}
f(X_{t}) = f(X_{0}) + \int_{0}^{t}\Lg^{U_{s}} f(X_{s})\,\D{s}
+ M_{t}\,,\quad\text{a.s.},
\end{equation}
where
\begin{equation*}
M_{t} \df \int_{0}^{t}\bigl\langle\nabla f(X_{s}),
\upsigma(X_{s})\,\D{W}_{s}\bigr\rangle
\end{equation*}
is a local martingale.
Krylov's extension of the It\^o formula \cite[p.~122]{Krylov}
extends \eqref{E4} to functions $f$ in the local
Sobolev space $\Sobl^{2,p}(\RR^{d})$.

Recall that a control is called \emph{Markov}  if
$U_{t} = v(t,X_{t})$ for a measurable map $v :\RR\times\RR^{d}\mapsto \Act$,
and it is called \emph{stationary Markov} if $v$ does not depend on
$t$, i.e., $v :\RR^{d}\mapsto \Act$.
Correspondingly, the equation 
\begin{equation}\label{E5}
X_{t} = x_{0} + \int_{0}^{t} b\bigl(X_{s},v(s,X_{s})\bigr)\,\D{s} +
\int_{0}^{t} \upsigma(X_{s})\,\D{W}_{s}
\end{equation}
is said to have a \emph{strong solution}
if given a Wiener process $(W_{t},\sF_{t})$
on a complete probability space $(\Omega,\sF,\Prob)$, there
exists a process $X$ on $(\Omega,\sF,\Prob)$, with $X_{0}=x_{0}\in\RR^{d}$,
which is continuous,
$\sF_{t}$-adapted, and satisfies \eqref{E5} for all $t$ at once, a.s.
A strong solution is called \emph{unique},
if any two such solutions $X$ and $X'$ agree
$\Prob$-a.s., when viewed as elements of $\Cc\bigl([0,\infty),\RR^{d}\bigr)$.
It is well known that under Assumptions (A1)--(A3),
for any Markov control $v$,
\eqref{E5} has a unique strong solution \cite{Gyongy-96}.

Let $\Usr$ denote the set of stationary Markov controls.
Under $v\in\Usr$, the process $X$ is strong Markov,
and we denote its transition function by $P^{t}_{v}(x,\cdot)$.
It also follows from the work of \cite{Bogachev-01,Stannat-99} that under
$v\in\Usr$, the transition probabilities of $X$
have densities which are locally H\"older continuous.
Thus $\Lg^{v}$ defined by
\begin{equation*}
\Lg^{v} f(x) = \sum_{i,j} a^{ij}(x)
\frac{\partial^{2}f}{\partial x_{i} \partial x_{j}} (x)
+\sum_{i} b^{i} (x,v(x)) \frac{\partial f}{\partial x_{i}} (x)\,,\quad v\in\Usr\,,
\end{equation*}
for $f\in\Cc^{2}(\RR^{d})$,
is the generator of a strongly-continuous
semigroup on $\Cc_{b}(\RR^{d})$, which is strong Feller.
We let $\Prob_{x}^{v}$ denote the probability measure and
$\Exp_{x}^{v}$ the expectation operator on the canonical space of the
process under the control $v\in\Usr$, conditioned on the
process $X$ starting from $x\in\RR^{d}$ at $t=0$.

\subsection{The ergodic control problem}

Let $r\colon \RR^{d} \times\Act\to\RR$ be a continuous function
bounded from below, referred to as the \emph{running cost}.
As is well known, the ergodic control problem, in its \emph{almost sure}
(or \emph{pathwise}) formulation,
seeks to a.s.\ minimize over all admissible $U\in\Uadm$
\begin{equation}\label{E-ergcrit}
\limsup_{t\to\infty}\; \frac{1}{t}\int_{0}^{t} r(X_{s},U_{s})\,\D{s}\,.
\end{equation}
A weaker, \emph{average} formulation seeks to minimize
\begin{equation}\label{E-avgcrit}
\limsup_{t\to\infty}\;\frac{1}{t}\int_{0}^{t}
\Exp^{U}\bigl[r(X_{s},U_{s})\bigr]\,\D{s}\,.
\end{equation}
We let $\beta$ be defined as 
\begin{equation}\label{E-beta}
\beta\df \inf_{U\in\Uadm}\; \limsup_{t\to\infty}\;\frac{1}{t}\int_{0}^{t}
\Exp^{U}\bigl[r(X_{s},U_{s})\bigr]\,\D{s}\,,
\end{equation}
i.e., the infimum of \eqref{E-avgcrit} over all
admissible controls.

We assume that the running
cost function $r\colon\RR^{d}\times\Act\to\RR_{+}$
is continuous and locally Lipschitz
in its first argument uniformly in $u\in\Act$.
Without loss of generality we let $\kappa_{R}$
be a Lipschitz constant of $r$ over $B_{R}$, i.e.,
More specifically, for some function $K_{c}\colon\RR_{+}\to\RR_{+}$,
\begin{equation*}
\babs{r(x,u)-r(y,u)} \le \kappa_{R}\abs{x-y}
\qquad\forall x,y\in B_{R}\,,~\forall u\in\Act\,,
\end{equation*}
and all $R>0$.

We work under the following stability assumption:
\begin{assumption}\label{A01}
There exists a nonnegative, inf-compact $\Lyap:\RR^{d}\to \RR$
and positive constants $c_{0}$, $c_{1}$ and $c_{2}$ satisfying
\begin{subequations}
\begin{align}
\Lg^{u} \Lyap(x) &\le  c_{0} - c_{1} \Lyap(x) \qquad \forall u\in\Act
\label{e-lyap}\\[5pt]
\sup_{u\in\Act}\;r(x,u)&\le c_{2} \Lyap(x)
\end{align}
\end{subequations}
for all $x\in\RR^{d}$.
Without loss of generality we assume $\Lyap\ge1$.
\end{assumption}

It is well known (see \cite{book,GihSko72}) that \eqref{e-lyap} implies that
\begin{equation}\label{e-estlyap}
\Exp^{U}_{x} \left[\Lyap(X_{t}) \right] \le
\frac{c_{0}}{c_{1}} + \Lyap(x) \E^{-c_{1} t}\qquad \forall x\in\RR^{d}\,,
\quad\forall U\in\Uadm\,.
\end{equation}

Recall that control $v\in\Usr$ is called \emph{stable}
if the associated diffusion is positive recurrent.
We denote the set of such controls by $\Ussr$.
Also we let $\imeas_{v}$ denote the unique invariant probability
measure on $\RR^{d}$ for the diffusion under the control $v\in\Ussr$.
It follows by \eqref{e-estlyap} that, under Assumption~\ref{A01},
all stationary Markov controls are stable and that
\begin{equation*}
\int_{\RR^{d}} \Lyap(x)\,\imeas_{v}(\D{x})\le
\frac{c_{0}}{c_{1}}\,.
\end{equation*}

Let $\Cc_{\Lyap}(\RR^{d})$ denote the Banach space of functions in $\Cc(\RR^{d})$
with norm $\norm{f}_{\Lyap} \df \sup_{x\in\RR^{d}}
\babs{\frac{f(x)}{\Lyap(x)}}$.
Recall that a skeleton of a continuous-time Markov process is
a discrete-time Markov process with transition probability
$\Tilde{P}= \int_{0}^{\infty} \alpha(\D{t})P^{t}$, where $\alpha$
is a probability measure on $(0,\infty)$.
Since the diffusion is non-degenerate, any skeleton of the process
is $\phi$-irreducible, with an irreducibility measure absolutely
continuous with respect to the Lebesgue measure.
It is also straightforward to show that compact subsets of $\RR^{d}$ are
petite.
It then follows that for any $v\in\Ussr$ the controlled process
under $v$ is $\Lyap$-geometrically ergodic
(see \cite{Down-95,Fort-05}), or in other words
there exist constants $C_{0}$ and $\gamma>0$ such that
if $h\in\Cc_{\Lyap}(\RR^{d})$ then
\begin{equation*}
\left|P^{t}_{v}h(x) - \int_{\RR^{d}}h(x)\,\imeas_{v}(\D{x})\right|
\le C_{0} \E^{-\gamma t}
\bnorm{h}_{\Lyap} \Lyap(x)\,,\quad t\ge0\,,~x\in\RR^{d}\,.
\end{equation*}

Concerning the ergodic control problem the following result is standard
\cite{book}.

\begin{theorem}
Under Assumption~\ref{A01} there exists a unique solution
$V^{*}\in\Cc_{\Lyap}(\RR^{d})\cap\Cc^{2}(\RR^{d})$,
satisfying $V^{*}(0)=0$,  of
\begin{equation}\label{E-HJB}
0 = \min_{u\in\Act}\; \left[\Lg^{u} V^{*}(x) +r(x,u)\right]-\beta\,.
\end{equation}
A control $v^{*}\in\Usr$ is optimal with respect to the criteria
\eqref{E-ergcrit} and \eqref{E-avgcrit} if and only if it satisfies
\begin{equation}\label{e-HJBmin}
\min_{u\in\Act}\; \left[ \sum_{i=1}^{d} b^{i}(x,u) 
\frac{\partial V}{\partial x_{i}}(x) + r(x,u) \right]
= \sum_{i=1}^{d} b^{i}\bigl(x,v^{*}(x)\bigr)
\frac{\partial V}{\partial x_{i}}(x)+ r\bigl(x,v^{*}(x)\bigr)
\end{equation}
a.e.\ in $\RR^{d}$.
\end{theorem}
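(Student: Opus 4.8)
The plan is to use the vanishing‐discount method. For $\alpha\in(0,1)$ let
$V_{\alpha}(x)\df\inf_{U\in\Uadm}\Exp_{x}^{U}\bigl[\int_{0}^{\infty}\E^{-\alpha t}r(X_{t},U_{t})\,\D{t}\bigr]$
be the $\alpha$‑discounted value function. From $\sup_{u}r(x,u)\le c_{2}\Lyap(x)$ together with
\eqref{e-estlyap} one gets $0\le\alpha V_{\alpha}(x)\le c_{2}c_{0}/c_{1}+c_{2}\Lyap(x)$, so $\alpha V_{\alpha}$
is bounded on compact sets uniformly in $\alpha$; standard discounted‑control theory then gives that
$V_{\alpha}\in\Sobl^{2,p}(\RR^{d})\cap\Cc^{2}(\RR^{d})$ and solves the discounted HJB equation
$\alpha V_{\alpha}=\min_{u\in\Act}\bigl[\Lg^{u}V_{\alpha}+r(\cdot,u)\bigr]$. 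Setting
$\bar V_{\alpha}\df V_{\alpha}-V_{\alpha}(0)$ we obtain
\begin{equation*}
\alpha\bar V_{\alpha}(x)+\alpha V_{\alpha}(0)=\min_{u\in\Act}\bigl[\Lg^{u}\bar V_{\alpha}(x)+r(x,u)\bigr]\,.
\end{equation*}

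The crucial a priori estimate is that $\{\bar V_{\alpha}\}_{\alpha\in(0,1)}$ is locally bounded uniformly
in $\alpha$, say $\abs{\bar V_{\alpha}(x)}\le k_{1}\bigl(1+\Lyap(x)\bigr)$ with $k_{1}$ independent of
$\alpha$. The upper bound follows by using a fixed stationary Markov control as a suboptimal control for
the discounted problem and invoking \eqref{e-estlyap}; the lower bound follows from a Harnack inequality
applied on each ball to the nonnegative supersolution $V_{\alpha}$ of the linear uniformly elliptic
equation $\Lg^{v_{\alpha}^{*}}V_{\alpha}=\alpha V_{\alpha}-r(\cdot,v_{\alpha}^{*})$ (with $v_{\alpha}^{*}$ a
minimizing selector), whose data are bounded uniformly in $\alpha$, which controls the oscillation of
$V_{\alpha}$ over $B_{R}$. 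Granted this bound, interior elliptic estimates for the HJB operator give
uniform bounds on $\bar V_{\alpha}$ in $\Sob^{2,p}(B_{R})$ for every $R<\infty$ and $p>d$, hence, by
Sobolev embedding, compactness in $\Cc^{1,\gamma}_{\mathrm{loc}}(\RR^{d})$. By Arzel\`a--Ascoli and a
diagonal argument, along some $\alpha_{n}\downarrow0$ we have $\bar V_{\alpha_{n}}\to V^{*}$ in
$\Cc^{1,\gamma}_{\mathrm{loc}}$ and weakly in $\Sobl^{2,p}(\RR^{d})$, and $\alpha_{n}V_{\alpha_{n}}(0)\to\bar\beta$
for some constant $\bar\beta$ (while $\alpha_{n}\bar V_{\alpha_{n}}\to0$ locally uniformly); passing to the
limit, $V^{*}$ solves \eqref{E-HJB} with $\beta$ replaced by $\bar\beta$, with $V^{*}(0)=0$ and, by the a
priori bound, $V^{*}\in\Cc_{\Lyap}(\RR^{d})$, and a bootstrap using (A1), (A3) and the local Lipschitz
continuity of $r$ upgrades this to $V^{*}\in\Cc^{2}(\RR^{d})$.

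It remains to identify $\bar\beta=\beta$ and to establish the optimality characterization, both via a
verification argument. Applying Krylov's extension of \eqref{E4} to $V^{*}$, with the stochastic integral
vanishing in expectation after localization (legitimate because $V^{*}\in\Cc_{\Lyap}$ and
$\Exp_{x}^{U}[\Lyap(X_{t})]$ is bounded by \eqref{e-estlyap}) and using $\Lg^{u}V^{*}+r(\cdot,u)\ge\bar\beta$,
\begin{equation*}
\Exp_{x}^{U}\bigl[V^{*}(X_{t})\bigr]=V^{*}(x)+\Exp_{x}^{U}\Bigl[\int_{0}^{t}\Lg^{U_{s}}V^{*}(X_{s})\,\D{s}\Bigr]\ge V^{*}(x)+\bar\beta t-\Exp_{x}^{U}\Bigl[\int_{0}^{t}r(X_{s},U_{s})\,\D{s}\Bigr]\,,\quad\forall U\in\Uadm\,.
\end{equation*}
Dividing by $t$, letting $t\to\infty$ and using $\Exp_{x}^{U}[V^{*}(X_{t})]/t\to0$, this gives
$\liminf_{t}\frac1t\int_{0}^{t}\Exp_{x}^{U}[r(X_{s},U_{s})]\,\D{s}\ge\bar\beta$, so $\beta\ge\bar\beta$.
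Running the same computation \emph{with equality} along a measurable selector $v^{*}\in\Usr$ of the
minimizer in \eqref{E-HJB} (which exists since $b,r$ are continuous and $\Act$ is compact), and using the
$\Lyap$‑geometric ergodicity of the diffusion under $v^{*}$ to evaluate the Ces\`aro limit as
$\int_{\RR^{d}}r(x,v^{*}(x))\,\imeas_{v^{*}}(\D{x})$, yields $\beta\le\int r(\cdot,v^{*})\,\D\imeas_{v^{*}}=\bar\beta$;
hence $\bar\beta=\beta$ and $v^{*}$ is optimal for \eqref{E-avgcrit} and, via the strong law of large
numbers for the martingale part and $V^{*}(X_{t})/t\to0$ a.s., also for \eqref{E-ergcrit}. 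More generally,
since by \eqref{E-HJB} the condition \eqref{e-HJBmin} for $v^{*}$ is equivalent to
$\Lg^{v^{*}}V^{*}+r(\cdot,v^{*})=\beta$ a.e., the same It\^o identity (now with equality) shows any such
$v^{*}$ is optimal; conversely, if $v^{*}\in\Usr$ is optimal then $\int r(\cdot,v^{*})\,\D\imeas_{v^{*}}=\beta$,
and substituting into the identity gives $\int_{\RR^{d}}\bigl(\Lg^{v^{*}}V^{*}+r(\cdot,v^{*})-\beta\bigr)\,\D\imeas_{v^{*}}=0$
with a nonnegative integrand; as $\imeas_{v^{*}}$ has a strictly positive density, the integrand vanishes
a.e., which is \eqref{e-HJBmin}.

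For uniqueness, if $(V_{i},\beta_{i})$, $i=1,2$, are two solutions with $V_{i}(0)=0$, the verification
inequality forces $\beta_{1}=\beta_{2}=\beta$. Taking $v_{1}^{*}$ a minimizing selector for $V_{1}$, the
difference $\psi\df V_{1}-V_{2}$ satisfies $\Lg^{v_{1}^{*}}\psi\le0$, $\psi(0)=0$, and $\psi\in\Cc_{\Lyap}$;
thus $\psi(X_{t})$ is a local supermartingale under $v_{1}^{*}$, and since $\Exp_{x}^{v_{1}^{*}}[\psi(X_{t})]\to\int\psi\,\D\imeas_{v_{1}^{*}}$
it is bounded below, hence a true supermartingale that converges a.s.\ as $t\to\infty$; positive recurrence
and continuity force the limit to equal $\psi(y)$ for every $y$, so $\psi$ is constant, and $\psi(0)=0$
gives $V_{1}\equiv V_{2}$. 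I expect the main obstacle to be the uniform‑in‑$\alpha$ a priori bounds on
$\bar V_{\alpha}$---in particular the local lower bound through the Harnack inequality---since these are
exactly what make the vanishing‑discount limit go through; the remaining steps are verification‑type
bookkeeping, with the only delicate point being the integrability needed to justify
$\int\Lg^{v}V^{*}\,\D\imeas_{v}=0$.
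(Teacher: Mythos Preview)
The paper does not give its own proof of this theorem: it is stated as a known result with the remark ``the following result is standard \cite{book}'' and no argument is supplied. So there is nothing in the paper to compare against beyond the citation.

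Your vanishing--discount outline is in fact the standard route taken in the cited monograph, so you are essentially reconstructing the proof the paper defers to. The existence part (discounted HJB, centering $\bar V_{\alpha}=V_{\alpha}-V_{\alpha}(0)$, uniform local $\Sob^{2,p}$ estimates, compactness and passage to the limit) and the verification/uniqueness parts are correctly sketched. One point to tighten: the uniform-in-$\alpha$ \emph{lower} bound on $\bar V_{\alpha}$ is not most naturally obtained from a Harnack inequality for a supersolution of a linear equation as you write. The clean argument in the reference is probabilistic: for $x$ and $0$ both inside $B_{R}$, compare $V_{\alpha}(x)$ and $V_{\alpha}(0)$ via the hitting time of a small ball around the other point under a fixed stable control (or the optimal one), using \eqref{e-estlyap} to bound the running cost accumulated before hitting and the exponential moments of the hitting time; this yields a two-sided bound $\abs{V_{\alpha}(x)-V_{\alpha}(0)}\le k_{1}(1+\Lyap(x))$ uniformly in $\alpha$. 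The Harnack route can be made to work but requires care because the zero-order coefficient $\alpha$ and the right-hand side both enter the constants, and you need the oscillation bound to be independent of $\alpha$.

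A second minor point: in the uniqueness paragraph, the step ``positive recurrence and continuity force the limit to equal $\psi(y)$ for every $y$'' is a bit quick; the cleaner version is to note that $\Lg^{v_{1}^{*}}\psi\le0$ with $\psi\in\Cc_{\Lyap}$ and $\imeas_{v_{1}^{*}}(\Lyap)<\infty$ gives $\int\Lg^{v_{1}^{*}}\psi\,\D\imeas_{v_{1}^{*}}=0$, hence $\Lg^{v_{1}^{*}}\psi=0$ a.e., and then the strong maximum principle (or the representation $\psi(x)=\Exp_{x}^{v_{1}^{*}}[\psi(X_{t})]\to\int\psi\,\D\imeas_{v_{1}^{*}}$) forces $\psi$ constant. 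Swapping the roles of $V_{1},V_{2}$ is also needed to get the reverse inequality.
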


For the rest of the paper $v^{*}\in\Ussr$
denotes some fixed control satisfying \eqref{e-HJBmin}.

\subsection{The relative value iteration}

We study the following relative value iteration (RVI) scheme:
\begin{equation}\label{E-RVI}
\frac{\partial V}{\partial t}(t,x)
= \min_{u\in\Act}\; \left[\Lg^{u} V(t,x) +r(x,u)\right]
- V(t,0)\,,\quad V(0,x)=V_{0}(x)\,,
\end{equation}
with the boundary condition $V_{0}\in\Cc_{\Lyap}(\RR^{d})\cap\Cc^{2}(\RR^{d})$.

The main theorem of the paper is as follows.

\begin{theorem}\label{Tmain}
For each $V_{0}\in\Cc_{\Lyap}(\RR^{d})\cap\Cc^{2}(\RR^{d})$,
the solution $V(t,x)$ of \eqref{E-RVI} converges to
$V^{*}(x)+\beta$ as $t\to\infty$.
\end{theorem}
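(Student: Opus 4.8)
The plan is to remove the non-local normalizer $-\,V(t,0)$ from \eqref{E-RVI} by a substitution, turning it into a bona fide Cauchy problem for a quasilinear parabolic HJB equation, and then to study the long-time behaviour of the latter through uniform-in-$t$ a priori bounds, parabolic compactness, and an identification of the $\omega$-limit set. First I would reduce: given a solution $V$ of \eqref{E-RVI} set $\Phi(t,x)\df V(t,x)+\int_{0}^{t}V(s,0)\,\D s$. Since every $\Lg^{u}$ annihilates constants, $\Phi$ solves $\partial_{t}\Phi=\min_{u\in\Act}[\Lg^{u}\Phi+r]$ with $\Phi(0,\cdot)=V_{0}$; conversely, with $\psi(t)\df\int_{0}^{t}V(s,0)\,\D s$ one has $\psi'+\psi=\Phi(t,0)$ and $\psi(0)=0$, so $\psi(t)=\int_{0}^{t}\E^{-(t-s)}\Phi(s,0)\,\D s$ and $V(t,x)=\Phi(t,x)-\psi(t)$. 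By the well-posedness and verification results for the finite-horizon HJB equation recalled in Section~\ref{S3}, $\Phi(t,x)=\inf_{U\in\Uadm}\Exp_{x}^{U}\bigl[\int_{0}^{t}r(X_{s},U_{s})\,\D s+V_{0}(X_{t})\bigr]$; hence it suffices to analyse $G(t,x)\df\Phi(t,x)-\beta t$ and then transfer the conclusion back through the linear relation for $\psi$.

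Next I would establish a priori bounds, uniform in $t$, on compact sets. Applying It\^o's formula to $V^{*}\in\Cc^{2}(\RR^{d})\cap\Cc_{\Lyap}(\RR^{d})$ along the trajectory under any $U\in\Uadm$ and using the HJB inequality $\Lg^{u}V^{*}+r(\cdot,u)\ge\beta$ together with \eqref{e-estlyap} gives $G(t,x)-V^{*}(x)\ge-\bnorm{V_{0}-V^{*}}_{\Lyap}\bigl(c_{0}/c_{1}+\Lyap(x)\bigr)$; using instead $v^{*}$, for which the HJB inequality is an equality, and $\Lyap$-geometric ergodicity, $G(t,x)-V^{*}(x)\le c+C_{0}\E^{-\gamma t}\bnorm{V_{0}-V^{*}}_{\Lyap}\Lyap(x)$ with $c\df\int_{\RR^{d}}(V_{0}-V^{*})\,\D\imeas_{v^{*}}$. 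Thus $\sup_{t\ge0}\sup_{x\in B_{R}}\babs{G(t,x)-V^{*}(x)}<\infty$ for every $R$, which through the formula for $\psi$ yields $\sup_{t\ge0}\babs{\psi(t)-\beta t}<\infty$ and hence $\sup_{t\ge1}\sup_{x\in B_{R}}\babs{V(t,x)}<\infty$. By Assumption~(A3) the equations for $V$ and for $G$ are uniformly parabolic on every ball, with locally bounded coefficients, concave (min-of-linear) nonlinearity, and --- for \eqref{E-RVI} --- a lower-order term $V(t,0)$ that is now a bounded $\Cc^{1}$ function of $t$ alone; the interior parabolic estimates quoted in Section~\ref{S3} then upgrade the $L^{\infty}_{\mathrm{loc}}$ bound to local $\Cc^{1+\alpha/2,\,2+\alpha}$ bounds on $\{V(t+\cdot,\cdot):t\ge2\}$ and on $\{G(t+\cdot,\cdot):t\ge2\}$, so these orbits are precompact in $\Cc^{1,2}_{\mathrm{loc}}$. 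As the semiflow of \eqref{E-RVI} is autonomous, its $\omega$-limit set $\Omega$ is nonempty, compact and connected, each $\phi\in\Omega$ lies on an entire orbit, and along some $t_{n}\uparrow\infty$ we have $V(t_{n}+s,\cdot)\to\tilde V(s,\cdot)$ and $G(t_{n}+s,\cdot)\to\tilde G(s,\cdot)$ in $\Cc^{1,2}_{\mathrm{loc}}$, locally uniformly in $s\in\RR$, where $\tilde W\df\tilde G-V^{*}$ is an entire solution of $\partial_{t}\tilde W=\min_{u\in\Act}[\Lg^{u}(\tilde W+V^{*})+r]-\beta$ --- for which every constant is an equilibrium --- and, passing the upper bound to the limit, $\tilde W\le c$ everywhere.

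The crux is then a rigidity statement: an entire solution $\tilde W$ of the last equation that is bounded above must be spatially constant, hence (substituting back, since $\Lg^{u}$ kills constants) constant in time too. This is precisely where one of the two advertised tools enters. In the monotone-systems route, the $G$-semiflow is strongly order preserving --- comparison principle plus the strong maximum principle furnished by (A3) --- and additively homogeneous, its only equilibria form the one-parameter family $\{V^{*}+c:c\in\RR\}$, and the structure theory for such semiflows forces the limit set onto that family. In a PDE-only route, one writes $c-\tilde W\ge0$ as a nonnegative ancient solution of a linear equation obtained by linearising along $\tilde W$, uniformly parabolic on compact sets, and invokes the parabolic Harnack/Liouville principle. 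Either way $\tilde W\equiv\bar c$ for a constant $\bar c$; and the constant is then annihilated by the recovery relation, since $G(\tau,0)\to\bar c$ along the subsequence forces $\psi(t_{n}+s)=\beta(t_{n}+s)-\beta+\bar c+o(1)$, whence $\tilde V(s,x)=\lim_{n}\bigl(\Phi(t_{n}+s,x)-\psi(t_{n}+s)\bigr)=V^{*}(x)+\beta$, independently of $\bar c$. Therefore $\Omega=\{V^{*}+\beta\}$, i.e.\ $V(t,\cdot)\to V^{*}+\beta$ locally uniformly. As an alternative to the rigidity step, the substantive input --- convergence of the iterates $G(t,\cdot)$ modulo constants --- can be obtained probabilistically, by exhibiting a reverse martingale built from the optimal finite-horizon policies together with the $\Lyap$-uniform ergodicity of the diffusion, and applying the reverse martingale convergence theorem.

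I expect this rigidity step to be the main obstacle. The tempting shortcut --- sandwiching $V_{0}$ between $V^{*}+\beta\pm K$ and exploiting monotonicity and additive homogeneity globally --- fails, because $V_{0}-V^{*}$ is in general unbounded (it grows like $\Lyap$), so the global sup/inf comparison is vacuous and one is confined to compact sets; there the flattening of $V(t,\cdot)$ toward an equilibrium has to be extracted from the strong-maximum-principle/Harnack structure of the non-degenerate parabolic equation, or from the mixing of the diffusion, rather than from order comparison alone. The remaining steps are routine modulo the PDE regularity and verification results quoted from Section~\ref{S3}.
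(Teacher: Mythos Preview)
Your reduction of \eqref{E-RVI} to the value iteration \eqref{E-VI} via the substitution $\Phi=V+\int_{0}^{t}V(s,0)\,\D s$ and the ODE $\psi'+\psi=\Phi(\cdot,0)$ is exactly the content of Lemma~\ref{L-VV}, and your a priori bounds match Lemma~\ref{L-bound}. Your two substantive tools for the convergence of $G=\Bar V$---strongly monotone semiflows and reverse martingales---are precisely the paper's two proofs of Theorem~\ref{t01}, so the proposal is correct and follows essentially the same approach.

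The framing differs in one respect worth flagging. The paper invokes Smith's abstract result (Theorem~\ref{T3.1}) to get that convergent points are dense, and then closes $\mathfrak{C}$ via the uniform-in-$t$ Lipschitz dependence $\bnorm{\Phi_{t}(V_{0})-\Phi_{t}(V_{0}')}_{\Lyap}\le(1+c_{0}/c_{1})\bnorm{V_{0}-V_{0}'}_{\Lyap}$; you instead pass to entire solutions in the $\omega$-limit set and assert a rigidity principle. These are equivalent in spirit, but your phrase ``the structure theory for such semiflows forces the limit set onto that family'' hides the same nontrivial step---upgrading \emph{generic} quasi-convergence to quasi-convergence of \emph{every} initial condition---and the paper's closedness argument is where that work actually gets done. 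Your additional PDE-only route via a parabolic Liouville principle is not in the paper; it becomes plausible once you use the sharper two-sided bound of Lemma~\ref{L-bound} (your lower bound as stated does not decay, but the paper's does) to obtain \emph{bounded} entire solutions, yet for a mere supersolution with only locally bounded coefficients this is not a textbook citation and would itself lean on the ergodicity of the $v^{*}$-diffusion, at which point it collapses into the reverse-martingale argument. Finally, your observation that the recovery relation annihilates the constant $\bar c$---so that quasi-convergence of $G$ already yields convergence of $V$---is correct and is exactly the computation carried out at the end of the paper's proof of Theorem~\ref{Tmain}.
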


The proof of convergence of \eqref{E-RVI} is facilitated
by the study of the \emph{value iteration} (VI) equation
\begin{equation}\label{E-VI}
\frac{\partial \Bar{V}}{\partial t}(t,x)
= \min_{u\in\Act}\; \left[\Lg^{u} \Bar{V}(t,x) +r(x,u)\right]
- \beta\,,\quad \Bar{V}(0,x)=V_{0}(x)\,.
\end{equation}
Here $V_{0}\in\Cc_{\Lyap}(\RR^{d})\cap\Cc^{2}(\RR^{d})$ as in \eqref{E-RVI}.
Also $\beta$ is as in \eqref{E-beta}, so it is assumed known.

As shown in Lemma~\ref{L-bound} in Section~\ref{S-main},
$\Bar{V}(t,\cdot)$ is bounded in $\Cc_{\Lyap}(\RR^{d})$ uniformly
in $t\ge0$.
By \eqref{E-VI} we have
\begin{equation}\label{E-ident}
\Bar{V}(t,x) = \inf_{U\in\Uadm}\;
\left(\int_{0}^{t} \Exp_{x}^{U}\left[r(X_{s},U_{s})-\beta\right]\,\D{s}
+ \Exp_{x}^{U}\left[V_{0}(X_{t})\right]\right)\,.
\end{equation}
Also, as we show in Lemma~\ref{L-VV},
\begin{equation*}
V(t,x) = \Bar{V}(t,x) - \E^{-t}\int_{0}^{t}\E^{s}\Bar{V}(s,0)\,\D{s} +
\beta(1-\E^{-t})\qquad \forall x\in\RR^{d}\,,~t\ge0\,.
\end{equation*}
It follows that $V(t,\cdot)$ is also bounded in $\Cc_{\Lyap}(\RR^{d})$ uniformly
in $t\ge0$.
Additionally, convergence of $\Bar{V}(t,\cdot)$ as $t\to\infty$
implies the analogous convergence of $V(t,\cdot)$.
In Section~\ref{S-main} we provide two separate proofs of convergence
of $\Bar{V}(t,\cdot)$ as $t\to\infty$ to a solution of \eqref{E-HJB}.
The first employs results from the theory of monotone dynamical systems,
while the second utilizes a reverse martingale convergence theorem.

\begin{remark}
Note that by \eqref{E-ident} convergence of $\Bar{V}(t,\cdot)$
as $t\to\infty$ to a solution of \eqref{E-HJB}
implies that $F(t,\cdot)$ defined by
\begin{equation*}
F(t,x)\df \inf_{U\in\Uadm}\;
\int_{0}^{t} \Exp_{x}^{U}\left[r(X_{s},U_{s})-\beta\right]\,\D{s}
\end{equation*}
also converges to a solution of the HJB equation in \eqref{E-HJB}.

Note also that the (VI)
provides a sharp bound for the performance of an optimal ergodic control
$v^{*}$ over a finite horizon.
Indeed, by \eqref{E-HJB}, we have
\begin{equation*}
V^{*}(x) =
\Exp_{x}^{v^{*}}\left[\int_{0}^{t}\left(r(X_{s},U_{s})-\beta\right)\,\D{s}\right]
+ \Exp_{x}^{v^{*}}\left[V^{*}(X_{t})\right]\,.
\end{equation*}
Therefore, by \eqref{E-VI} with boundary condition $V_{0}\equiv0$, we obtain
\begin{equation}\label{E-bias}
\int_{0}^{t} \Exp^{v^{*}}_{x}\left[r(X_{t},U_{t})\right]
-\inf_{U\in\Uadm}\; \int_{0}^{t} \Exp^{U}_{x}\left[r(X_{t},U_{t})\right]
= V^{*}(x)-\Exp^{v^{*}}_{x}\left[V^{*}(X_{t})\right] - \Bar{V}(t,x)\,,
\end{equation}
and the infimum is realized by any measurable selector from the minimizer
of the (VI).
Since the right hand side of \eqref{E-bias} is bounded in
$\Cc_{\Lyap}(\RR^{d})$ uniformly in $t\in[0,\infty)$, it follows that,
under Assumption~\ref{A01}, a stationary Markov average-cost
optimal control $v^{*}$ satisfies
\begin{equation*}
\int_{0}^{T} \Exp_{x}^{v^{*}}\left[r(X_{t},U_{t})\right]\,\D{t}
\le K_{0} \Lyap(x) + \inf_{U\in\Uadm}\;
\int_{0}^{T} \Exp_{x}^{U}\left[r(X_{t},U_{t})\right]\,\D{t}\qquad
\forall T\ge0\,.
\end{equation*}
This provides a sharp bound for bias and overtaking optimality
over the class of all Markov controls
(compare with the results in 
\cite{Jasso-09} which are restricted to the class
of optimal stationary Markov controls).
\end{remark}

\section{Preliminaries}\label{S3}

\subsection{A Result from Monotone Dynamical Systems}\label{S-dyn}

Let $\cH$ be a subset of a metric space $\cY$ of real valued functions defined
on a set $\cX$.
Suppose also that $\cH$ is a subset of a Banach space $\cG$
with a positive cone $\cG_{+}$ which
has a nonempty interior.
Let $\preceq$ be the natural partial order on
$\cH$ relative to the positive cone of $\cG_{+}$.
In other words, for $h, h'\in\cH$ we write $h\preceq h'$ if $h'-h\in\cG_{+}$
for all $x\in\cX$.
We also introduce the relation
$\prec\!\!\!\prec$ and  write $h\prec\!\!\!\prec h'$ if
$h'-h\in\Int(\cG_{+})$, where `$\Int$' denotes the interior.

Let $\Phi : \cH\times\RR_{+}\to\cH$ be a semiflow on $\cH$.
In other words, $\Phi$ satisfies
\begin{romannum}
\item
$\Phi_{0}(h) = h$ for all $h\in\cH$;
\item
$\Phi_{t}\circ\Phi_{s} = \Phi_{t+s}$ for all $t$, $s\in\RR_{+}$.
\end{romannum}

As well known, if $h\in\cH$, then its
\emph{orbit} $O(h)$ is
defined by $O(h) \df \{\Phi_{t}(h) : t\ge 0\}$.
Also the
\emph{$\omega$-limit set} of $h\in\cH$ is denoted by $\omega(h)$
and defined as
$\omega(h) \df \cap_{t>0}\;\overline{\cup_{s\ge t}\; \Phi_{t}(h)}$, where
the closure is in $\cY$.
The semiflow is called \emph{monotone} (\emph{strongly monotone}) if
$h\preceq h'$ ($h\prec h'$)
implies that $\Phi_{t}(h)\preceq\Phi_{t}(h')$
($\Phi_{t}(h)\prec\!\!\!\prec\Phi_{t}(h')$) for all $t>0$.
It is called \emph{eventually strongly monotone}
if it is monotone and whenever $h\prec h'$ there exists some $t_{0}\in\RR_{+}$
such that $\Phi_{t_{0}}(h)\prec\!\!\!\prec\Phi_{t_{0}}(h')$.
As shown in \cite[Proposition~1.1]{Smith-95}, if $\Phi$ is
eventually strongly monotone then it is
\emph{strongly order preserving} (SOP), and this means that
whenever $h\prec h'$ there
exist open neighborhoods $U$ and $U'$ of $h$ and $h'$, respectively,
and $t_{0}>0$ such
that $\Phi_{t}(U) \preceq \Phi_{t}(U')$ for all $t>t_{0}$.

Let
\begin{equation*}
\mathscr{E}\df\{h\in\cH : \Phi_{t}(h)=h\,,~\forall t\ge0\}\,.
\end{equation*}
In other words, $\mathscr{E}$ is the set of equilibria of the semiflow.
A point $h\in\cH$
is called \emph{quasiconvergent} if $\omega(h)\subset\mathscr{E}$,
and \emph{convergent} if $\omega(h)$ is a singleton.
Let $\mathfrak{Q}$ and $\mathfrak{C}$ denote the sets of quasiconvergent
and convergent points, respectively.

We quote the following theorem \cite[Theorem~4.3 and Remark~4.2]{Smith-95}
which shows that quasiconvergence is generic.
We need the following notation:
We write $h_{n}\uparrow\uparrow h$ ($h_{n}\downarrow\downarrow h$)
if $h_{n}\prec h_{n+1}$ ($h_{n}\succ h_{n+1}$) and
$\lim_{n} h_{n} \to h$ in $\cH$.

\begin{theorem}\label{T3.1}
Let $\Phi_{t}$ be a strongly preserving semiflow on $\cH\subset\cG$.
Suppose that
\begin{romannum}
\item
For any $h\in\cH$ there exists a sequence $\{h_{n}\}\subset\cH$ such
that $h_{n}\uparrow\uparrow h$ or $h_{n}\downarrow\downarrow h$.
\item
For each $h\in\cH$ the closure of $O(h)$ is a compact subset of $\cH$.
\item
If $\{h_{n}\}\subset\cH$ is such that $h_{n}\uparrow\uparrow h$ or
$h_{n}\downarrow\downarrow h$, then
$\left\{\cup_{n\in\NN}\;\omega(h_{n})\right\}$ has compact closure in $\cY$
which is contained in $\cH$.
\end{romannum}
Then
$\cH = \Int(\mathfrak{Q}) \cup \overline{\Int(\mathfrak{C})}$.
Moreover, if $\mathscr{E}$ is totally ordered with respect to $\preceq$,
then $\mathfrak{Q}=\mathfrak{C}$ which implies
that $\cH = \overline{\Int(\mathfrak{C})}$.
\end{theorem}

\subsection{The case of continuous time controlled Markov chains}

To illustrate our approach, we consider here the simple
case of a controlled Markov chain
with state space $\cS$ in continuous time, with
`rate matrix' $Q^u=\left[q_{ij}(u)\right]$, $i,j\in\cS$,
depending continuously on
a parameter $u$ that lives in a compact action space $\Act$.
The matrix $Q$ satisfies $q_{ij}\ge 0$ for all $i\ne j$
and $\sum_{j\in\cS} q_{ij} = 0$.
Suppose first that the state space is finite, i.e., $\cS=\{1,\dotsc,N\}$.
To guarantee irreducibility we assume that there exists an irreducible
rate matrix $\Bar{Q}=[q_{ij}]$ and a constant $\delta>0$ such
that $q_{ij}(u) \ge \delta \Bar{q}_{ij}$ for all $i\ne j$ and $u\in\Act$.
Let $r:\cS\times\Act\to\RR$ be a running cost.
The solution of the ergodic control problem has the following characterization:
There exists a unique pair $(V^{*},\beta)$ with $\beta$ a constant and
$V^{*}:\cS\to\RR$, satisfying $V^{*}(N)=0$, which solve with $V=V^{*}$ the
equation
\begin{equation}\label{E-HJBchain}
\min_{u\in\Act}\; \left[\sum_{j\in\cS} q_{ij}(u) V(j) + r(i,u)\right]=\beta
\qquad\forall i\in\cS\,.
\end{equation}
Moreover a stationary Markov control
$v=(v_{1},\dotsc,v_{N})$ is average-cost optimal if and only
if it is a  selector from the minimizer in \eqref{E-HJBchain}.
Expressing $r$ in vector form as $r(u)=\bigl(r(1,u),\dotsc,r(N,u)\bigr)\transp$,
the relative value iteration algorithm takes the form of the
following differential equation in $\RR^{N}$:
\begin{equation}\label{E-RVIchain}
\frac{\D{h}}{\D{t}} = \min_{u\in\Act}\; \left[Q(u) h + r(u)\right]
-\bm{1}h_{N}(t)\,,
\quad h(0)=g\in\RR^{N}.
\end{equation}
where $\bm{1}$ indicates the vector whose components are all equal to $1$.
Showing existence of solutions to \eqref{E-RVIchain} is straightforward.
One can follow for example the method used in the proof
of Lemma~\ref{L-exist} which appears in Section~\ref{S-main}.
The corresponding value iteration equation is
\begin{equation}\label{E-VIchain}
\frac{\D{\Bar{h}}}{\D{t}}
= \min_{u\in\Act}\; \left[Q(u) \Bar{h} + r(u)\right]-\bm{1}\beta\,,
\quad h(0)=g\in\RR^{N}.
\end{equation}
We apply Theorem~\ref{T3.1} to \eqref{E-VIchain}.
Here $\cH$ and $\cG$ are isomorphic to $\RR^{N}$ under the Euclidean norm
topology.
Hence the partial ordering is $\Bar{h}\preceq \Bar{h}' \Longleftrightarrow
\Bar{h}_{i}\le \Bar{h}'_{i}$ for all $i\in\cS$.
The fact that \eqref{E-VIchain} is strongly order preserving follows
from the irreducibility of the chain.
Hypothesis (i) of Theorem~\ref{T3.1} is obviously satisfied in $\cH\sim\RR^{N}$.
Since the solution of \eqref{E-VIchain} is uniformly bounded for
any initial condition with the bound depending continuously on
the initial condition $g$, it follows that hypotheses (ii) and (iii) of
Theorem~\ref{T3.1} are satisfied.
The equilibrium set $\mathscr{E}$
of \eqref{E-VIchain} is the set of $V\in\RR^{N}$ which
solve \eqref{E-HJBchain}.
Hence $\mathscr{E}=\{V^{*}+c: c\in\RR\}$, which is a totally ordered set.
It then follows from Theorem~\ref{T3.1} that
$\cH = \overline{\Int(\mathfrak{C})}$.
It is also straightforward
to show from \eqref{E-VIchain} that the solutions are continuous
with respect to the initial condition, uniformly in $t\in[0,\infty)$,
i.e., that if $g^{n}$ is a sequence converting $g\in\RR^{N}$ as $n\to\infty$,
then
\begin{equation*}
\sup_{t\ge0}\; \babs{\Phi_{t}(g^{n})-\Phi_{t}(g)}\xrightarrow[n\to\infty]{}0\,.
\end{equation*}
As a result, $\mathfrak{C}$ is closed and hence every initial condition
is convergent point.
By \eqref{E-RVIchain}--\eqref{E-VIchain} and following the argument
at the end of Section~\ref{S4.1} for the
proof of Theorem~\ref{Tmain}, it follows
that $h(t)$ converges to $V^{*}+\beta$.
Convergence of the relative value iteration for countable state space
Markov chains in continuous time follows along the same lines, provided
a Lyapunov hypothesis analogous to \eqref{e-lyap} is imposed,
as well as appropriate assumptions
to guarantee the regularity of the process.
We don't delve into these details, since the focus in this paper
is continuous state space models.

\subsection{Notation and Background}\label{S-not}

The term \emph{domain} in $\RR^{d}$
refers to a nonempty, connected open subset of the Euclidean space $\RR^{d}$.
We introduce the following notation for spaces of real-valued functions on
a domain $D\subset\RR^{d}$.
The space $\Lp^{p}(D)$, $p\in[1,\infty)$, stands for the usual Banach space
of (equivalence classes) of measurable functions $f$ satisfying
$\int_{D} \abs{f(x)}^{p}\,\D{x}<\infty$, and $\Lp^{\infty}(D)$ is the
Banach space of functions that are essentially bounded in $D$. 
The space $\Cc^{k}(D)$ ($\Cc^{\infty}(D)$)
refers to the class of all functions whose partial
derivatives up to order $k$ (of any order) exist and are continuous.
The standard Sobolev space of functions on $D$ whose generalized
derivatives up to order $k$ are in $\Lp^{p}(D)$, equipped with its natural
norm, is denoted by $\Sob^{k,p}(D)$, $k\ge0$, $p\ge1$.

We adopt the notation
$\partial_{t}\df\tfrac{\partial}{\partial{t}}$, and for $i,j\in\NN$,
$\partial_{i}\df\tfrac{\partial~}{\partial{x}_{i}}$ and
$\partial_{ij}\df\tfrac{\partial^{2}~}{\partial{x}_{i}\partial{x}_{j}}$.
We often use the standard summation rule that
repeated subscripts and superscripts are summed from $1$ through $d$.

\subsection{Some Facts from Parabolic Equations}

For a nonnegative multi-index $\alpha=(\alpha_{1},\dotsc,\alpha_{d})$
we let $D^{\alpha}\df \partial_{1}^{\alpha_{1}}\dotsb
\partial_{d}^{\alpha_{d}}$.
Let $Q$ be a domain in $\RR_{+}\times\RR^{d}$.
Recall that $\Cc^{r,k+2r}(Q)$ stands for the set of bounded
continuous functions $\varphi(t,x)$ defined on $Q$ such
that the derivatives $D^{\alpha}\partial_{t}^{\ell}\varphi$
are bounded and continuous in $Q$ for
\begin{equation}\label{e-indices}
\abs{\alpha}+2\ell \le k+2r\,,\qquad \ell\le r\,.
\end{equation}
For $\varphi\in\Cc^{r,k+2r}(\Bar{Q})$ and $p\in[1,\infty)$, define
\begin{equation*}
\bnorm{\varphi}_{\Sob^{r,k+2r,p}(Q)} \df
\sum_{\substack{\abs{\alpha}\le k+2(r-\ell)\\
\ell\le r}} \bnorm{D^{\alpha}\partial_{t}^{\ell}\varphi}_{\Lp^{p}(Q)}\,.
\end{equation*}
The \emph{parabolic Sobolev space} $\Sob^{r,k+2r,p}(Q)$
is the subspace of $\Lp^{p}(Q)$ which consists of those functions
$\varphi$ for which there exists a sequence $\varphi_{n}$ in
$\Cc^{r,k+2r}(\Bar{Q})$ such that $\bnorm{\varphi_{n}-\varphi}_{\Lp^{p}(Q)}\to0$
as $n\to\infty$ and
\begin{equation*}
\bnorm{D^{\alpha}\partial_{t}^{\ell}\varphi_{n}
-D^{\alpha}\partial_{t}^{\ell}\varphi_{m}}_{\Lp^{p}(Q)}
\xrightarrow[n,m\to\infty]{}0\,,
\end{equation*}
for all $\alpha$ and $\ell$ satisfying \eqref{e-indices}.
In this way the Sobolev derivatives
$D^{\alpha}\partial_{t}^{\ell}\varphi$ are well defined
as functions in $\Lp^{p}(Q)$ and 
$\Sob^{r,k+2r,p}(Q)$ is a Banach space under the norm introduced.

Let $r:\RR^{d}\times\Act$ be a nonnegative continuous function which is
locally Lipschitz continuous in $x$ uniformly in $u\in\Act$.
Let $\kappa_{R}$ be a Lipschitz constant of $r$ over $B_{R}$.

We next review some standard estimates for solutions of equations
of the form
\begin{equation}\label{e-quasi}
-\partial_{t}\varphi(t,x) +
\min_{u\in\Act}\; \left[\Lg^{u} \varphi(t,x) +
r(x,u)\right] = f(t,x)
\end{equation}
and
\begin{equation}\label{e-lin1}
-\partial_{t}\varphi(t,x) + \Lg^{v_{t}} \varphi(t,x) = g(t,x)\,.
\end{equation}
Note that if $v$ is a measurable selector from the minimizer
in \eqref{e-quasi} then the quasilinear equation
\eqref{e-quasi} transforms to the linear equation
\eqref{e-lin1},
which in fact takes the particular form
\begin{equation}\label{e-lin2}
-\partial_{t}\varphi(t,x) + a^{ij}\partial_{ij}\varphi(t,x) +
H(D\varphi,x) = f(t,x)\,,
\end{equation}
where $H$ is Lipschitz continuous in its arguments.

For $R>0$ and $0\le T'<T$ define $B_{R}^{T',T}\df (T',T)\times B_{R}$.
Let $g\in\Sob^{0,k,p} \bigl(B_{R}^{0,T}\bigr)$ and
suppose that $\varphi\in\Sob^{1,2,p}\bigl(B_{R}^{0,T}\bigr)$
is a solution of \eqref{e-lin1}.
Then for any $R'\in(0,R)$ and $T'\in(0,T)$ it holds that
$\varphi\in\Sob^{1,2+k,p}\bigl(B_{R'}^{T',T}\bigr)$  and there exists
a constant $C_{1}=C_{1}(R',R,T',T,k,d,\kappa_{R},\kappa_{1},p)$ such that
\begin{equation}\label{est01}
\bnorm{\varphi}_{\Sob^{1,2+k,p}\bigl(B_{R'}^{T',T}\bigr)}
\le C_{1}\left(\bnorm{g}_{\Sob^{0,k,p}\bigl(B_{R}^{0,T}\bigr)}
+\bnorm{\varphi}_{\Lp^{p}\bigl(B_{R}^{0,T}\bigr)}\right)\,.
\end{equation}
Combining \eqref{est01} with the compactness of the imbedding
of $\Sob^{2,p}(B_{R})\hookrightarrow \Cc^{1}(\Bar{B}_{R})$, for $p>d$,
and the interpolation inequality,
we conclude by using \eqref{e-lin2} that if
$f\in\Sob^{0,1,p}\bigl(B_{R}^{0,T}\bigr)$,
then $\varphi\in\Cc^{1,2}\bigl(B_{R'}^{T',T}\bigr)$ and
\begin{equation}\label{est02}
\max_{ \abs{\alpha}\le 2}\;\sup_{B_{R'}^{T',T}}\;
\abs{D^{\alpha}\varphi}
\le C_{2}\left(\bnorm{f}_{\Sob^{0,k,p}\bigl(B_{R}^{0,T}\bigr)}
+\bnorm{\varphi}_{\Lp^{p}\bigl(B_{R}^{0,T}\bigr)}\right)\,,
\end{equation}
where $C_{2}$ depends on the parameters in $C_{1}$.
Moreover, if the derivatives $\partial_{i}f$ are bounded on
$B_{R}^{0,T}$ then
\begin{equation}\label{est03}
\max_{ \abs{\alpha}\le 1}\;\sup_{B_{R'}^{T',T}}\;
\abs{D^{\alpha}\partial_{t}\varphi}
\le C_{3}\left(
\max_{ \abs{\alpha}\le 1}\;\sup_{B_{R'}^{T',T}}\;
\abs{D^{\alpha}f} +
\bnorm{f}_{\Sob^{0,k,p}\bigl(B_{R}^{0,T}\bigr)}
+\bnorm{\varphi}_{\Lp^{p}\bigl(B_{R}^{0,T}\bigr)}\right)\,.
\end{equation}
These estimates can be found in \cite[Chapter~5]{Krylov-08}.

\section{Main Results}\label{S-main}

\subsection{Proof of Theorem~\ref{Tmain}}\label{S4.1}

The proof of Theorem~\ref{Tmain} involves several intermediate results.
For a subset $Q$ of $\RR_{+}\times\RR^{d}$, by abuse of notation,
we let $\Cc_{\Lyap}(Q)$ denote the Banach space of functions
in $\Cc(Q)$ with norm
\begin{equation*}
\norm{f}_{\Lyap} \df \sup_{(t,x)\in Q}\;
\babs{\tfrac{f(t,x)}{\Lyap(x)}}\,.
\end{equation*}
Let $\RR^{d}_{T}\df[0,T]\times\RR^{d}$.
We next show that \eqref{E-RVI} has a unique solution
in $\Cc_{\Lyap}(\RR^{d}_{T})\cap\Cc^{1,2}(\RR^{d}_{T})$, for any $T>0$.

\begin{lemma}\label{L-exist}
For each $V_{0}\in\Cc_{\Lyap}(\RR^{d})\cap\Cc^{2}(\RR^{d})$,
there exists a unique solution
$V\in\Cc_{\Lyap}(\RR^{d}_{T})\cap\Cc^{1,2}(\RR^{d}_{T})$, for
any $T>0$.
\end{lemma}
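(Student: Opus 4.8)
The plan is to eliminate the nonlocal term $V(t,0)$ by an elementary change of unknown, reducing \eqref{E-RVI} to the semilinear parabolic Cauchy problem
\begin{equation*}
\partial_{t}W(t,x)=\min_{u\in\Act}\;\bigl[\Lg^{u}W(t,x)+r(x,u)\bigr]\,,\qquad W(0,x)=V_{0}(x)\,,
\end{equation*}
and then to prove existence and uniqueness for the latter. Indeed, if $V\in\Cc_{\Lyap}(\RR^{d}_{T})\cap\Cc^{1,2}(\RR^{d}_{T})$ and $W(t,x)\df V(t,x)+\int_{0}^{t}V(s,0)\,\D{s}$, then, since $\Lg^{u}$ differentiates only in $x$ while the added term is spatially constant, $V$ solves \eqref{E-RVI} if and only if $W$ solves the displayed problem; conversely $V$ is recovered from $W$ via $V(t,x)=W(t,x)-\E^{-t}\int_{0}^{t}\E^{s}W(s,0)\,\D{s}$. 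As $\Lyap\ge1$, the (purely temporal, $\Cc^{1}$) correction terms are bounded on $[0,T]$, so $V\mapsto W$ is a bijection of $\Cc_{\Lyap}(\RR^{d}_{T})\cap\Cc^{1,2}(\RR^{d}_{T})$ onto itself. The displayed equation is \eqref{E-VI} with the constant $\beta$ set to $0$; moreover, because $a$ does not depend on $u$, it is genuinely semilinear --- of the form \eqref{e-lin2} with $f\equiv0$ and $H(p,x)=\min_{u}[b^{i}(x,u)p_{i}+r(x,u)]$, which is locally Lipschitz in $(p,x)$ by (A1), (A2) and the Lipschitz hypothesis on $r$.

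For existence I would truncate. For each $R>0$ solve the Cauchy--Dirichlet problem for the semilinear equation on $B_{R}^{0,T}$ with data $V_{0}$ on the parabolic boundary; the operator is uniformly parabolic on $B_{R}$ with locally Lipschitz coefficients and $V_{0}\in\Cc^{2}$, so standard parabolic theory (in the spirit of the estimates \eqref{est01}--\eqref{est03}) yields a solution $W_{R}$, smooth in the interior and regular up to $\{t=0\}$. The crucial step is a bound on $W_{R}$ that is \emph{independent of} $R$: using Assumption~\ref{A01} one checks that
\begin{equation*}
\Psi(t,x)\df\bigl(\norm{V_{0}}_{\Lyap}+c_{2}t\bigr)\Lyap(x)+c_{0}\norm{V_{0}}_{\Lyap}t+\tfrac{1}{2}c_{0}c_{2}t^{2}
\end{equation*}
is a supersolution dominating $V_{0}$ on the parabolic boundary of every $B_{R}^{0,T}$ --- one computes $-\partial_{t}\Psi+\min_{u}[\Lg^{u}\Psi+r]\le-c_{1}(\norm{V_{0}}_{\Lyap}+c_{2}t)\Lyap\le0$ from $\Lg^{u}\Lyap\le c_{0}-c_{1}\Lyap$, $\sup_{u}r\le c_{2}\Lyap$ and $\Lyap\ge1$ --- while $\Phi(t,x)\df-\norm{V_{0}}_{\Lyap}\Lyap(x)-c_{0}\norm{V_{0}}_{\Lyap}t$ is a subsolution lying below $V_{0}$ there. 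The comparison principle then gives $\abs{W_{R}(t,x)}\le C_{T}\Lyap(x)$ on $B_{R}^{0,T}$, with $C_{T}$ depending only on $T$, $\norm{V_{0}}_{\Lyap}$ and $c_{0},c_{1},c_{2}$.

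Next I would let $R\to\infty$. Fixing $R_{0}$ and taking $R>2R_{0}$, the uniform bound gives $\norm{W_{R}}_{\Lp^{p}(B_{2R_{0}}^{0,T})}\le C$; since the equation has the form \eqref{e-lin2} with $f\equiv0$, the interior estimates \eqref{est01}--\eqref{est03} bound $W_{R}$ in $\Cc^{1,2}(B_{R_{0}}^{\varepsilon,T})$, with H\"older moduli, uniformly in $R$, while a barrier argument near $\{t=0\}$ (using $V_{0}\in\Cc^{2}$) yields equicontinuity up to the initial surface. A diagonal argument extracts $W_{R_{n}}\to W$ in $\Cc^{1,2}_{\mathrm{loc}}\bigl((0,T]\times\RR^{d}\bigr)\cap\Cc\bigl([0,T]\times\RR^{d}\bigr)$ with $W(0,\cdot)=V_{0}$; because $\Lg^{u}W_{R_{n}}\to\Lg^{u}W$ locally uniformly and uniformly in $u$ (with $\Act$ compact), the minimum passes to the limit, so $W$ solves the Cauchy problem, and since $\abs{W}\le C_{T}\Lyap$ survives the limit we get $W\in\Cc_{\Lyap}(\RR^{d}_{T})\cap\Cc^{1,2}(\RR^{d}_{T})$; transforming back produces a solution of \eqref{E-RVI}.

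For uniqueness I would use a stochastic representation. Let $W$ be any solution of the Cauchy problem in the stated class. Applying It\^o's formula \eqref{E4} to $s\mapsto W(t-s,X_{s})$ along the diffusion driven by an admissible control $U$, stopped at $\tau_{R}\wedge t$ with $\tau_{R}$ the first exit time from $B_{R}$, and using the pointwise inequality $\partial_{t}W\le\Lg^{U_{s}}W+r(\cdot,U_{s})$, one obtains
\begin{equation*}
W(t,x)\le\Exp_{x}^{U}\!\left[\int_{0}^{\tau_{R}\wedge t}r(X_{s},U_{s})\,\D{s}\right]+\Exp_{x}^{U}\bigl[W(t-\tau_{R}\wedge t,X_{\tau_{R}\wedge t})\bigr]\,.
\end{equation*}
Letting $R\to\infty$ --- non-explosion together with \eqref{e-estlyap} provides the uniform integrability needed to pass $\Exp_{x}^{U}[W(t-\tau_{R}\wedge t,X_{\tau_{R}\wedge t})]\to\Exp_{x}^{U}[V_{0}(X_{t})]$ --- and then choosing $U$ to be a measurable selector from the minimizer (so that equality holds throughout) shows that
\begin{equation*}
W(t,x)=\inf_{U\in\Uadm}\left(\Exp_{x}^{U}\!\left[\int_{0}^{t}r(X_{s},U_{s})\,\D{s}\right]+\Exp_{x}^{U}\bigl[V_{0}(X_{t})\bigr]\right),
\end{equation*}
a quantity independent of the chosen solution; uniqueness for \eqref{E-RVI} follows via the bijection above. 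The main obstacle is the pair of estimates in the middle: producing the $R$-independent $\Lyap$-weighted bound (that is, pinning down the growth rate at spatial infinity) and feeding it into the interior parabolic estimates to pass to the limit --- and the same control at infinity is exactly what legitimizes the passage $R\to\infty$ in the uniqueness argument.
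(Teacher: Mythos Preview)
Your proof is correct and takes a genuinely different route from the paper's. The paper does \emph{not} eliminate the nonlocal term $V(t,0)$ by a change of unknown. Instead, it first fixes an arbitrary bounded continuous $g:[0,T]\to\RR$, proves existence and uniqueness in $\Cc_{\Lyap}(\RR^{d}_{T})\cap\Cc^{1,2}(\RR^{d}_{T})$ for the auxiliary equation $\partial_{t}\varphi=\min_{u}[\Lg^{u}\varphi+r]-g(t)$, and then shows that the map $g(\cdot)\mapsto\varphi(\cdot,0)$ is a contraction on $\Cc([0,T])$ for $T<1$ (via the stochastic representation and the elementary inequality $\abs{\inf A-\inf B}\le\sup\abs{A-B}$); existence and uniqueness for the (RVI) then follow by a fixed-point argument on short intervals, concatenated. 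For the auxiliary problem the paper also proceeds by approximation on $B_{R}$, but with two differences from yours: it additionally truncates $r$ to $r^{n}=n\wedge r$, and it obtains the $R$-independent $\Lyap$-weighted bounds from the \emph{stochastic representation} \eqref{e-repr1} together with \eqref{e-estlyap}, rather than from PDE barriers.

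Your substitution $W=V+\int_{0}^{t}V(s,0)\,\D s$ is more direct for this particular nonlocality and avoids the contraction/concatenation step entirely (and it is essentially the same algebra that appears later in Lemma~\ref{L-VV}). Your barrier construction is a clean PDE substitute for the stochastic bound. The paper's approach, on the other hand, is more modular---the auxiliary problem with a general forcing $g(t)$ is solved once and for all, and the nonlocal coupling is isolated as a separate contraction---which would adapt more readily to other nonlocal terms (e.g.\ $F[V(t,\cdot)]$ for a Lipschitz functional $F$) where no exact decoupling transform is available.
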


\begin{proof}
We first show that if
$g:[0,T]\to \RR^{d}$ is a bounded continuous function, then
\begin{equation}\label{e-aux0}
\partial_{t}\varphi(t,x)
= \min_{u\in\Act}\; \left[\Lg^{u} \varphi(t,x) +r(x,u)\right]
- g(t)\,,\quad \varphi(0,x)=V_{0}(x)
\end{equation}
has a unique solution in
$\Cc_{\Lyap}(\RR^{d}_{T})\cap\Cc^{1,2}(\RR^{d}_{T})$.

Let $r^{n}$ denote the truncation of $r$, i.e.,
$r^{n}(c,u) \df n\wedge r(x,u)$.
Let $\uptau_{R}$ denote the first exit time from the ball of radius $R$
centered at the origin in $\RR^{d}$, and let $\psi_{R}:\RR^{d}\to[0,1]$ be
a smooth function which satisfies $\psi_{R}(x)=1$ for
$\abs{x}\le\nicefrac{R}{2}$ and $\psi_{R}(x)=0$ for
$\abs{x}\ge\nicefrac{3R}{4}$.
Then the boundary value problem
\begin{equation}\label{e-aux1}
\begin{gathered}
\partial_{t} \varphi_{n,R}(t,x)
= \min_{u\in\Act}\; \left[\Lg^{u} \varphi_{n,R}(t,x) +r^{n}(x,u)\right]
- g(t)\,,\quad \\[5pt]
\varphi_{n,R}(0,x)=V_{0}(x)\psi_{R}(x)\,,\qquad
\varphi_{n,R}(t,\cdot\,)\vert_{\partial B_{R}}= 0\quad\forall t\ge0\,,
\end{gathered}
\end{equation}
has a unique solution in $\Cc^{1,2}(\RR^{d}_{T})$.
This solution has the stochastic representation
\begin{equation}\label{e-repr1}
\varphi_{n,R}(t,x)
= \inf_{U\in\Uadm}\;
\Exp^{U}_{x}\left[V_{0}(X_{t})\psi_{R}(X_{t})\Ind\{t<\uptau_{R}\}
+\int_{0}^{t\wedge\uptau_{R}}
\left(r^{n}(X_{s},U_{s})-g(s)\right)\,\D{s}\right]\,,
\end{equation}
where $\Ind$ denotes the indicator function.
Since
\begin{align*}
\int_{0}^{t} \Exp^{U}_{x}\left[r^{n}(X_{s},U_{s})\right]\,\D{s}
&\le c_{2} \int_{0}^{t} \Exp^{U}_{x}\left[\Lyap(X_{s})\right]\,\D{s}\\
&\le \frac{c_{2}}{c_{1}}\left( c_{0} t + \Lyap(x)\right)
\qquad \forall U\in\Uadm\,,
\end{align*}
and $V_{0}\in\Cc_{\Lyap}(\RR^{d})$,
we obtain
\begin{equation}\label{e-bound1}
\varphi_{n,R}(t,x)\le c_{3} (1+\Lyap(x))
+ \frac{c_{2}}{c_{1}}\left( c_{0} t + \Lyap(x)\right)
+ \norm{g}_{\Lp^{1}([0,T])}
\end{equation}
for some constant $c_{3}>0$.
Also by \eqref{e-repr1} we have
\begin{equation}\label{e-bound2}
\varphi_{n,R}(t,x)\ge - c_{3} (1+\Lyap(x))-\int_{0}^{t} g(s)\,\D{s}\,,
\end{equation}
and it follows
that for any fixed $g$ and $V_{0}$, the solution
$\varphi_{n,R}$ is bounded in $\Cc_{\Lyap}(\RR^{d}_{T})$ uniformly in
$R>0$ and $n\in\NN$.
The interior estimates of solutions of \eqref{e-aux1} (see \cite[p.~342
and p.~351]{Lady})
allow us to take limits as $R\to\infty$ (along some subsequence)
to obtain a solution $\varphi_{n}\in\Cc^{1,2}(\RR^{d}_{T})$ to
\begin{equation}\label{e-aux2}
\partial_{t}\varphi_{n}(t,x)
= \min_{u\in\Act}\; \left[\Lg^{u} \varphi_{n}(t,x) +r^{n}(x,u)\right]
- g(t)\,,\quad \varphi_{n}(0,x)=V_{0}(x)\,,
\end{equation}
which naturally satisfies the bounds in \eqref{e-bound1}--\eqref{e-bound2}.
Using again the interior estimates of solutions
to \eqref{e-aux2} we can let $n\to\infty$
to obtain in the limit a solution
$\varphi\in\Cc_{\Lyap}(\RR^{d}_{T})\cap\Cc^{1,2}(\RR^{d}_{T})$ to \eqref{e-aux0}.
Showing uniqueness of this solution is standard.
Let $\varphi$ and $\varphi'$ be such solutions of \eqref{e-aux0}
corresponding to $g$ and $g'$, respectively.
Using the inequality $\abs{\inf\;A-\inf\;B}\le \sup\;\abs{A-B}$
we have
\begin{align*}
\sup_{t\in[0,T]}\;\babs{\varphi(t,0)-\varphi'(t,0)} &\le \sup_{t\in[0,T]}\;\left|
\inf_{U\in\Uadm}\;\Exp_{0}^{U}\left[
V_{0}(X_{t}) + \int_{0}^{t} \bigl(r(X_{s},U_{s})-g(s)\bigr)\,\D{s}\right]\right.\\
&\mspace{150mu}\left.
- \inf_{U\in\Uadm}\;\Exp_{0}^{U}\left[
V_{0}(X_{t}) + \int_{0}^{t} \bigl(r(X_{s},U_{s})-g'(s)\bigr)\,\D{s}\right]
\right|\\[5pt]
&\le \sup_{t\in[0,T]}\;\left|
\int_{0}^{t} \bigl[g(s)-g'(s)\bigr]\,\D{s}\right|\\[5pt]
&\le T\sup_{t\in[0,T]}\;\babs{g(t)-g'(t)}\,.
\end{align*}
Hence for $T<1$ the map $g(\cdot)\mapsto \varphi(\cdot,0)$
is a contraction thus asserting the
existence of a solution to \eqref{E-RVI} in
$\Cc_{\Lyap}(\RR^{d}_{T})\cap\Cc^{1,2}(\RR^{d}_{T})$, for $T<1$.
Concatenating intervals $[0,T]$, $[T,2T]$, $\dotsc$, with $T<1$,
we obtain such a solution
of \eqref{E-RVI} for any $T>0$.
Uniqueness is again standard.
\end{proof}

The next two lemmas concern estimates for the solutions of the
(RVI) and the (VI).

\begin{lemma}\label{L-bound}
For each $V_{0}\in\Cc_{\Lyap}(\RR^{d})\cap\Cc^{2}(\RR^{d})$,
the solution $\Bar{V}$ of \eqref{E-VI} satisfies
the bound
\begin{equation}\label{EL-bound}
\babs{V^{*}(x) - \Bar{V}(t,x)} \le \bnorm{V^{*}-V_{0}}_{\Lyap}
\left(\frac{c_{0}}{c_{1}} + \Lyap(x) \E^{-c_{1} t}\right)
\qquad \forall x\in\RR^{d}\,,~\forall t\ge0\,.
\end{equation}
\end{lemma}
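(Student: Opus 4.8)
The plan is to derive \eqref{EL-bound} from the stochastic representation \eqref{E-ident} of $\Bar{V}$ together with the Dynkin identity for $V^{*}$ supplied by the HJB equation \eqref{E-HJB}, estimating the difference by means of the Lyapunov bound \eqref{e-estlyap}. Recall that by \eqref{E-ident},
\[
\Bar{V}(t,x) = \inf_{U\in\Uadm}\;\Bigl(\int_{0}^{t}\Exp_{x}^{U}\bigl[r(X_{s},U_{s})-\beta\bigr]\,\D{s}
+ \Exp_{x}^{U}\bigl[V_{0}(X_{t})\bigr]\Bigr)\,.
\]
On the other hand, applying It\^o's formula \eqref{E4} to $V^{*}\in\Cc^{2}(\RR^{d})$ under an arbitrary $U\in\Uadm$, localizing with the exit times $\uptau_{R}$ to dispose of the local martingale term, and then using the HJB inequality $\Lg^{u}V^{*}(x)+r(x,u)\ge\beta$ for all $u\in\Act$ (with equality along $v^{*}$), one obtains
\[
V^{*}(x)\;\le\;\int_{0}^{t}\Exp_{x}^{U}\bigl[r(X_{s},U_{s})-\beta\bigr]\,\D{s}
+ \Exp_{x}^{U}\bigl[V^{*}(X_{t})\bigr]\,,\qquad \forall U\in\Uadm\,,
\]
with equality when $U=v^{*}$ (this is precisely the identity displayed in the Remark following Theorem~\ref{Tmain}).

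For the upper estimate on $\Bar{V}(t,x)-V^{*}(x)$, I would bound the infimum defining $\Bar{V}(t,x)$ by its value at $U=v^{*}$ and subtract the equality case of the Dynkin identity; the running-cost integrals cancel, leaving
\[
\Bar{V}(t,x)-V^{*}(x)\;\le\;\Exp_{x}^{v^{*}}\bigl[V_{0}(X_{t})-V^{*}(X_{t})\bigr]
\;\le\;\bnorm{V^{*}-V_{0}}_{\Lyap}\,\Exp_{x}^{v^{*}}\bigl[\Lyap(X_{t})\bigr]\,,
\]
which is at most $\bnorm{V^{*}-V_{0}}_{\Lyap}\bigl(\tfrac{c_{0}}{c_{1}}+\Lyap(x)\E^{-c_{1}t}\bigr)$ by \eqref{e-estlyap}. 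For the lower estimate, fix $\varepsilon>0$, pick an $\varepsilon$-optimal $U_{\varepsilon}\in\Uadm$ in the infimum defining $\Bar{V}(t,x)$, and apply the Dynkin inequality for $V^{*}$ with that same control; subtracting gives
\[
\Bar{V}(t,x)-V^{*}(x)\;\ge\;\Exp_{x}^{U_{\varepsilon}}\bigl[V_{0}(X_{t})-V^{*}(X_{t})\bigr]-\varepsilon
\;\ge\;-\bnorm{V^{*}-V_{0}}_{\Lyap}\Bigl(\tfrac{c_{0}}{c_{1}}+\Lyap(x)\E^{-c_{1}t}\Bigr)-\varepsilon\,,
\]
and letting $\varepsilon\downarrow0$ yields \eqref{EL-bound}.

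The one point requiring care is the justification of the Dynkin identity for $V^{*}$ over the finite horizon $[0,t]$ under a general admissible $U$: one localizes at $\uptau_{R}$, where the stopped local martingale in \eqref{E4} is a true martingale, and then passes to the limit $R\to\infty$. This limit is legitimate because $\abs{V^{*}}\le\norm{V^{*}}_{\Lyap}\Lyap$, $\sup_{u\in\Act}r(\cdot,u)\le c_{2}\Lyap$, and $\{\Lyap(X_{t\wedge\uptau_{R}})\}_{R>0}$ is uniformly integrable by \eqref{e-estlyap} (via Fatou's lemma applied to $\Exp_{x}^{U}[\Lyap(X_{t\wedge\uptau_{R}})]$); all of this is standard given Assumption~\ref{A01}. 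The remainder of the argument is elementary bookkeeping.
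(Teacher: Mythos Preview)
Your argument is correct and is essentially the paper's own proof: both bound $\Bar{V}-V^{*}$ above via the control $v^{*}$ and below via a (near-)minimizer of the VI, reducing to $\Exp_{x}^{U}[V_{0}(X_{t})-V^{*}(X_{t})]$ and then invoking \eqref{e-estlyap}. The only cosmetic difference is that the paper works directly with the parabolic inequalities for $V^{*}-\Bar{V}$ and uses the measurable selector $\Bar{v}$ from the VI minimizer, whereas you unpack the same content through the representation \eqref{E-ident} and an $\varepsilon$-optimal admissible control; neither change affects the substance.
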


\begin{proof}
Let $v^{*}$ be a measurable selector from the minimizer in \eqref{E-HJB}.
Then
\begin{equation}\label{e-t01.0}
-\partial_{t}(V^{*}-\Bar{V}) + \Lg^{v^{*}}(V^{*}-\Bar{V})\le 0
\end{equation}
from which, by an application of It\^o's formula to
$V^{*}(X_{s}) - \Bar{V}(t-s, X_{s})$, $s\in[0,t]$, it follows that
\begin{equation}\label{e-t01.1}
\Exp_{x}^{v^{*}}\left[V^{*}(X_{t}) -V_{0}(X_{t})\right]
\le V^{*}(x) - \Bar{V}(t,x)\,.
\end{equation}
On the other hand, if $\Bar{v}$ is a measurable selector from the
minimizer in \eqref{E-VI}, then
\begin{equation*}
-\partial_{t}(V^{*}-\Bar{V}) + \Lg^{\Bar{v}}(V^{*}-\Bar{V})\ge 0\,,
\end{equation*}
and we obtain
\begin{equation}\label{e-t01.2}
V^{*}(x) - \Bar{V}(t,x)\le
\Exp_{x}^{\Bar{v}}\left[V^{*}(X_{t}) -V_{0}(X_{t})\right]\,.
\end{equation}
Since $V^{*}$ and $V_{0}$ are in $\Cc_{\Lyap}(\RR^{d})$, 
\eqref{EL-bound} follows by
\eqref{e-estlyap} and \eqref{e-t01.1}--\eqref{e-t01.2}.
\end{proof}

\begin{remark}\label{R4.3}
Note that the Markov control associated  with
a measurable selector $\Bar{v}$ from the
minimizer in \eqref{E-VI} is computed `backward' in time.
Hence the control applied to the process $X$ considered
in \eqref{e-t01.2}
is the Markov control $U(s,x)=\Bar{v}(t-s,x)$, $0\le s\le t$,
where $\Bar{v}$ solves
\begin{equation*}
\partial_{t}\Bar{V}(t,x) = a^{ij}(x)\partial_{ij}\Bar{V}(t,x) +
b^{i}\bigl(x,\Bar{v}(t,x)\bigr)\partial_{i}\Bar{V}(t,x)
+ r\bigl(x,\Bar{v}(t,x)\bigr) -\beta\,.
\end{equation*}
\end{remark}

\begin{lemma}\label{L-VV}
If $\Bar{V}(0,x)=V(0,x)=V_{0}(x)$
for some $V_{0}\in\Cc_{\Lyap}(\RR^{d})\cap\Cc^{2}(\RR^{d})$,
then the solutions $V$ and $\Bar{V}$ of \eqref{E-RVI} and \eqref{E-VI},
respectively,
satisfy
\begin{align}
V(t,x) - V(t,0) &= \Bar{V}(t,x) - \Bar{V}(t,0)\label{EL-VV01}\\[5pt]
V(t,x) &= \Bar{V}(t,x) - \E^{-t}\int_{0}^{t}\E^{s}\Bar{V}(s,0)\,\D{s} +
\beta(1-\E^{-t})\label{EL-VV02}
\end{align}
for all $x\in\RR^{d}$ and all $t\ge0$.
\end{lemma}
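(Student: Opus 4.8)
The plan is to produce an explicit solution of the (RVI) equation \eqref{E-RVI} of the form $\Bar{V}(t,x)+\phi(t)$, with $\phi$ a scalar function of $t$ alone, and then identify it with $V$ through the uniqueness assertion of Lemma~\ref{L-exist}. The point to exploit is that the operators $\Lg^{u}$ of \eqref{E3} carry no zeroth-order term, so that $\Lg^{u}\phi(t)\equiv0$ for any function $\phi$ independent of $x$, whence
\begin{equation*}
\min_{u\in\Act}\;\bigl[\Lg^{u}\bigl(\Bar{V}(t,x)+\phi(t)\bigr)+r(x,u)\bigr]
= \min_{u\in\Act}\;\bigl[\Lg^{u}\Bar{V}(t,x)+r(x,u)\bigr]\,.
\end{equation*}
Setting $W(t,x)\df\Bar{V}(t,x)+\phi(t)$ and substituting the (VI) equation \eqref{E-VI}, a direct computation yields
\begin{equation*}
\partial_{t}W(t,x) - \min_{u\in\Act}\;\bigl[\Lg^{u}W(t,x)+r(x,u)\bigr] + W(t,0)
= \phi'(t)+\phi(t) - \beta + \Bar{V}(t,0)\,,
\end{equation*}
and $W(0,\cdot)=V_{0}+\phi(0)$. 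Hence $W$ solves \eqref{E-RVI} with initial data $V_{0}$ precisely when $\phi$ solves the scalar linear ODE $\phi'(t)+\phi(t)=\beta-\Bar{V}(t,0)$ with $\phi(0)=0$.

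I would then solve this ODE explicitly. Since $\Bar{V}\in\Cc_{\Lyap}(\RR^{d}_{T})\cap\Cc^{1,2}(\RR^{d}_{T})$ (the case $g\equiv\beta$ of the construction in the proof of Lemma~\ref{L-exist}), the map $t\mapsto\Bar{V}(t,0)$ is continuous on $[0,T]$, and it is bounded there by Lemma~\ref{L-bound}. Consequently the unique solution of the ODE is
\begin{equation*}
\phi(t) = \E^{-t}\int_{0}^{t}\E^{s}\bigl(\beta-\Bar{V}(s,0)\bigr)\,\D{s}
= \beta\bigl(1-\E^{-t}\bigr) - \E^{-t}\int_{0}^{t}\E^{s}\Bar{V}(s,0)\,\D{s}\,,
\end{equation*}
which is continuous and bounded on $[0,T]$, so that $W\in\Cc_{\Lyap}(\RR^{d}_{T})\cap\Cc^{1,2}(\RR^{d}_{T})$. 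Thus $W$ is a solution of \eqref{E-RVI} with initial data $V_{0}$, and the uniqueness part of Lemma~\ref{L-exist} forces $V(t,x)=W(t,x)=\Bar{V}(t,x)+\phi(t)$, which is exactly \eqref{EL-VV02}. Finally \eqref{EL-VV01} follows at once, since $V(t,x)-\Bar{V}(t,x)=\phi(t)$ does not depend on $x$ and hence cancels in the difference $V(t,x)-V(t,0)$.

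There is no genuinely difficult analytic step here: the argument consists of a verification together with an appeal to uniqueness. The only point requiring a little attention is confirming that the candidate $W$ indeed lies in the function class $\Cc_{\Lyap}(\RR^{d}_{T})\cap\Cc^{1,2}(\RR^{d}_{T})$ in which Lemma~\ref{L-exist} guarantees uniqueness, and this in turn reduces to the continuity and boundedness of $t\mapsto\Bar{V}(t,0)$ on $[0,T]$, both of which are already in hand.
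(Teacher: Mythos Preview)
Your argument is correct. The route differs from the paper's, though the two are close in spirit. The paper first writes down the stochastic representations
\[
V(t,x)=\inf_{U\in\Uadm}\Bigl(\cdots\Bigr)-\int_{0}^{t}V(s,0)\,\D{s}\,,\qquad
\Bar{V}(t,x)=\inf_{U\in\Uadm}\Bigl(\cdots\Bigr)-\beta t\,,
\]
from which \eqref{EL-VV01} is immediate; subtracting at $x=0$ then yields the integral equation $V(t,0)-\beta+\int_{0}^{t}\bigl(V(s,0)-\beta\bigr)\,\D{s}=\Bar{V}(t,0)-\beta$, which is solved for $V(t,0)$ and combined with \eqref{EL-VV01} to give \eqref{EL-VV02}. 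You instead work at the PDE level: you posit $W=\Bar{V}+\phi(t)$, reduce \eqref{E-RVI} to the scalar ODE $\phi'+\phi=\beta-\Bar{V}(t,0)$, solve it, and invoke the uniqueness part of Lemma~\ref{L-exist} to identify $W$ with $V$. Your approach avoids the stochastic representations entirely and is slightly more self-contained as a PDE argument, at the cost of relying on the uniqueness assertion in Lemma~\ref{L-exist}; the paper's approach makes the relation \eqref{EL-VV01} transparent from the outset and does not need to appeal back to uniqueness. Both are short and valid.
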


\begin{proof}
By \eqref{E-RVI} and \eqref{E-VI} we have
\begin{subequations}
\begin{align}
V(t,x) &= \inf_{U\in\Uadm}\;
\left(\int_{0}^{t} \Exp_{x}^{U}\left[r(X_{s},U_{s})\right]\,\D{s}
+ \Exp_{x}^{U}\left[V_{0}(X_{t})\right]\right)
-\int_{0}^{t}V(s,0)\,\D{s}\label{EL-VV03a}\\[5pt]
\Bar{V}(t,x) &= \inf_{U\in\Uadm}\;
\left(\int_{0}^{t} \Exp_{x}^{U}\left[r(X_{s},U_{s})\right]\,\D{s}
+ \Exp_{x}^{U}\left[V_{0}(X_{t})\right]\right)-\beta t\label{EL-VV03b}\,.
\end{align}
\end{subequations}
Hence \eqref{EL-VV01} follows by \eqref{EL-VV03a}--\eqref{EL-VV03b}.
Again by \eqref{EL-VV03a}--\eqref{EL-VV03b} we have
\begin{equation}\label{E-VV04}
V(t,0)-\beta + \int_{0}^{t}\bigl(V(s,0)-\beta\bigr)\,\D{s} = \Bar{V}(t,0)-\beta\,,
\end{equation}
and solving \eqref{E-VV04} we obtain
\begin{equation*}
V(t,0) = \Bar{V}(t,0) - \E^{-t}\int_{0}^{t}\E^{s}\Bar{V}(s,0)\,\D{s} +
\beta(1-\E^{-t})\,,
\end{equation*}
which combined with \eqref{EL-VV01} yields \eqref{EL-VV02}.
\end{proof}

Next we show that the solution $\Bar{V}$ of the (VI) converges
as $t\to\infty$ for any initial condition $V_{0}$.

\begin{theorem}\label{t01}
For each $V_{0}\in\Cc_{\Lyap}(\RR^{d})\cap\Cc^{2}(\RR^{d})$,
$\Bar{V}(t,x)\to V^{*}(x)+c$ as $t\to\infty$, for some $c\in\RR$
which depends on $V_{0}$.
\end{theorem}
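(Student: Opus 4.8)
The plan is to recast the value iteration \eqref{E-VI} as a semiflow and invoke the monotone-dynamical-systems result Theorem~\ref{T3.1}. Define $\Phi_{t}(V_{0})\df\Bar{V}(t,\cdot)$ on $\cH\df\Cc_{\Lyap}(\RR^{d})\cap\Cc^{2}(\RR^{d})$, regarded both as a subset of $\cY\df\Cc(\RR^{d})$ with the topology of uniform convergence on compact sets and as a subset of the ordered Banach space $\cG\df\Cc_{\Lyap}(\RR^{d})$, whose positive cone (the nonnegative functions) has nonempty interior since $\Lyap$ is an interior point. The construction in the proof of Lemma~\ref{L-exist}, applied with $g\equiv\beta$, shows that $\Phi_{t}$ is a well-defined map of $\cH$ into itself, and uniqueness there yields the semiflow identities; moreover the representation \eqref{E-ident} identifies $\Phi_{t}(V_{0})(x)$ with the optimal cost of a finite-horizon control problem with terminal cost $V_{0}$, which makes the order-theoretic properties transparent.

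I would then verify the hypotheses of Theorem~\ref{T3.1}. Monotonicity is immediate from \eqref{E-ident}, as the terminal cost enters monotonically and the infimum preserves order. For eventual strong monotonicity, if $V_{0}\prec V_{0}'$ then, freezing a measurable selector $\Bar{v}$ from the minimizer in \eqref{E-VI} (as in the proof of Lemma~\ref{L-bound}), the nonnegative function $D\df\Phi_{t}(V_{0}')-\Phi_{t}(V_{0})$ is a supersolution of the nondegenerate linear parabolic equation $\partial_{t}w=\Lg^{\Bar{v}}w$; the strong maximum principle --- equivalently, the irreducibility furnished by Assumption~(A3) --- forces $D(t,\cdot)>0$ on all of $\RR^{d}$ for every $t>0$, which is promoted to $\Phi_{t}(V_{0})\prec\!\!\!\prec\Phi_{t}(V_{0}')$, so that $\Phi$ is strongly order preserving by \cite[Proposition~1.1]{Smith-95}. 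Hypothesis~(i) is clear, since $V_{0}-\tfrac1n\Lyap\uparrow\uparrow V_{0}$ (replacing $\Lyap$ by a smooth inf-compact surrogate if needed). For hypotheses~(ii) and~(iii), Lemma~\ref{L-bound} bounds the orbit of $V_{0}$ in $\Cc_{\Lyap}(\RR^{d})$ uniformly in $t$, hence locally uniformly; inserting this into the interior parabolic estimates \eqref{est01}--\eqref{est03} for \eqref{E-VI} (whose right-hand side is locally Lipschitz as required) bounds $\Bar{V}(t,\cdot)$ together with its spatial derivatives of order $\le 2$ locally uniformly for $t\ge1$, the slab $t\in[0,1]$ being handled by the continuity up to $t=0$ afforded by $V_{0}\in\Cc^{2}$; Arzel\`a--Ascoli then makes the orbit precompact in $\cY$, its closure being made of $\Cc^{2}$ functions obeying the same $\Cc_{\Lyap}$ bound and thus lying in $\cH$, and the same reasoning along a monotone sequence of initial conditions gives~(iii).

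The equilibrium set $\mathscr{E}$ is precisely the set of solutions in $\cH$ of the HJB equation \eqref{E-HJB}, which by the well-posedness theorem recalled in Section~\ref{S2} equals $\{V^{*}+c:c\in\RR\}$, a totally ordered subset of $\cH$ since the difference of any two of its members is a constant. Theorem~\ref{T3.1} thus yields $\mathfrak{Q}=\mathfrak{C}$ and $\cH=\overline{\Int(\mathfrak{C})}$. To conclude that \emph{every} initial condition is convergent, I would use the bound $\norm{\Phi_{t}(V_{0})-\Phi_{t}(V_{0}')}_{\Lyap}\le\bigl(1+\tfrac{c_{0}}{c_{1}}\bigr)\norm{V_{0}-V_{0}'}_{\Lyap}$ valid for all $t\ge0$, which follows from \eqref{E-ident} (via $\abs{\inf A-\inf B}\le\sup\abs{A-B}$) and \eqref{e-estlyap}: it shows $\Phi$ depends Lipschitz-continuously on its initial datum uniformly in $t$, so a routine three-$\epsilon$ argument shows $\mathfrak{C}$ is closed, forcing $\mathfrak{C}=\cH$. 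Hence $\omega(V_{0})$ is a singleton, which is an equilibrium by invariance of $\omega$-limit sets, and therefore $\Bar{V}(t,x)\to V^{*}(x)+c$ for some $c=c(V_{0})$.

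The step I expect to require the most care is the calibration of the abstract framework to this unbounded-weight setting: one must choose $\cY$, $\cG$ and the order cone so that the compactness hypotheses hold while the cone still has nonempty interior, and, above all, establish the strong order preserving property, which entails promoting the positivity of $D(t,\cdot)$ to an interior-of-cone statement; this is where the nondegeneracy and the sharp bound \eqref{EL-bound} (in particular the fact that $\omega$-limit points of an orbit are even uniformly close to $V^{*}$, not merely $\Lyap$-weightedly) carry the argument. An alternative, purely probabilistic proof sidesteps these issues: writing the dynamic programming principle behind \eqref{E-ident} as the iteration of a one-parameter family of DP operators along an optimally controlled trajectory, one recognizes $\Bar{V}(t,\cdot)$, up to an explicit correction term, as a conditional expectation with respect to a decreasing family of $\sigma$-fields, and applies the reverse martingale convergence theorem, the limit then being checked to solve \eqref{E-HJB}.
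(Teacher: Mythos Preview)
Your proposal is correct and follows essentially the same route as the paper's proof: both set up \eqref{E-VI} as a semiflow on $\Cc_{\Lyap}(\RR^{d})\cap\Cc^{2}(\RR^{d})$, invoke Theorem~\ref{T3.1} after verifying strong monotonicity via \eqref{e-smonotone} and precompactness via Lemma~\ref{L-bound} together with interior parabolic estimates, identify $\mathscr{E}=\{V^{*}+c\}$ as totally ordered, and close $\mathfrak{C}$ using the uniform-in-$t$ Lipschitz bound $\bnorm{\Phi_{t}(V_{0})-\Phi_{t}(V_{0}')}_{\Lyap}\le(1+c_{0}/c_{1})\bnorm{V_{0}-V_{0}'}_{\Lyap}$. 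The only cosmetic difference is that the paper takes $\cY=\cH$ under a metric combining the $\Cc_{\Lyap}$-norm with local $\Cc^{2}$-norms (rather than your $\cY=\Cc(\RR^{d})$ with the compact-open topology), and you have correctly flagged as the most delicate point the promotion of pointwise positivity of $\Phi_{t}(V_{0}')-\Phi_{t}(V_{0})$ to membership in $\Int(\cG_{+})$, which the paper likewise asserts without spelling out; your closing remark on the reverse-martingale alternative is exactly the paper's second proof.
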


\begin{proof}
We view the solutions of \eqref{E-VI} as a semiflow
on $\cH=\cY=\Cc_{\Lyap}(\RR^{d})\cap\Cc^{2}(\RR^{d})$,
also letting $\cG=\Cc_{\Lyap}(\RR^{d})$,
and apply Theorem~\ref{T3.1}.
We equip $\Cc_{\Lyap}(\RR^{d})\cap\Cc^{2}(\RR^{d})$
with a complete metric, for example
by letting
\begin{equation*}
d(f,g) \df \bnorm{f-g}_{\Cc_{\Lyap}(\RR^{d})} + \sum_{n=1}^{\infty}\frac{1}{2^{n}}
\max\left(1,\bnorm{f-g}_{\Cc^{2}(B_{n})}\right)\,,
\end{equation*}
where $B_{n}$ denotes the ball of radius $n$ centered at the origin
in $\RR^{d}$ and
\begin{equation*}
\bnorm{f}_{\Cc^{2}(B)}\df
\sum_{\abs{\alpha}\le 2}
\sup_{B}\;\abs{D^{\alpha}f}\,.
\end{equation*}
Hypothesis (i) of Theorem~\ref{T3.1} is clearly satisfied.
Let $\Phi_{t}(V_{0}):\RR^{d}\to\RR$ denote the solution of
\eqref{E-VI} corresponding
to $V_{0}\in\Cc_{\Lyap}(\RR^{d})\cap\Cc^{2}(\RR^{d})$.
Let
$\mathscr{E}\df\{V^{*} + c : c\in\RR\}
\subset\Cc_{\Lyap}(\RR^{d})\cap\Cc^{2}(\RR^{d})$,
i.e., the set of equilibria of this semiflow.
Note the following:
\begin{enumerate}
\item[(a)]
for each $V_{0}\in\Cc_{\Lyap}(\RR^{d})\cap\Cc^{2}(\RR^{d})$,
$\Phi_{t}(V_{0})$ is bounded 
in $\Cc_{\Lyap}(\RR^{d})$ by \eqref{EL-bound}.
Also the second order partial derivatives of $\Phi_{t}(V_{0})$ are locally
equicontinuous in $x$, uniformly in $t\ge T$ for some $T>0$
(this requires a slight
improvement of \eqref{est02}, adding H\"older continuity which
is standard \cite[Theorem~5.1]{Lady}).
Hence, every subsequence $\Phi_{t_{n}}(V_{0})$
contains a further subsequence that converges
in $\Cc_{\Lyap}(\RR^{d})\cap\Cc^{2}(\RR^{d})$, 
which, in turn, implies that
the orbit $\{\Phi_{t}(V_{0}): t\in\RR_{+}\}$ has a compact closure
in $\Cc_{\Lyap}(\RR^{d})\cap\Cc^{2}(\RR^{d})$.
\item[(b)]
If $\{V_{0}^{n}\}\subset\Cc_{\Lyap}(\RR^{d})\cap\Cc^{2}(\RR^{d})$
is a monotone sequence
such that $V_{0}^{n}\to V_{0}\in\Cc_{\Lyap}(\RR^{d})\cap\Cc^{2}(\RR^{d})$
as $n\to\infty$,
then by \eqref{EL-bound} the set
$\left\{\cup_{n\in\NN}\;\Phi_{t}(V_{0}^{n}): t\in\RR_{+}\right\}$ is bounded
in $\Cc_{\Lyap}(\RR^{d})$.
Hence it has locally H\"older equicontinuous
second order partial derivatives in $x$, which implies that it
has a compact closure in $\Cc_{\Lyap}(\RR^{d})\cap\Cc^{2}(\RR^{d})$.
In particular, the set
$\left\{\cup_{n\in\NN}\;\omega(V_{0}^{n})\right\}$
has compact closure in $\Cc_{\Lyap}(\RR^{d})\cap\Cc^{2}(\RR^{d})$.
\end{enumerate}
Hence assumptions (ii) and (iii) of Theorem~\ref{T3.1} are satisfied.

Consider the partial order relation $\preceq$ on
$\Cc_{\Lyap}(\RR^{d})\cap\Cc^{2}(\RR^{d})$ induced by the positive cone
$\Cc_{\Lyap}(\RR^{d})_{+}$ in $\Cc_{\Lyap}(\RR^{d})$.
Indeed, if $V_{0}\prec V_{0}'$ then \eqref{E-VI} yields
\begin{equation}\label{e-smonotone}
\Exp_{x}^{v'}\left[V_{0}'(X_{t})-V_{0}(X_{t})\right]
\le \Phi_{t}(V_{0}')(x)-\Phi_{t}(V_{0})(x)
\qquad \forall t>0\,,\quad\forall x\in\RR^{d}\,,
\end{equation}
where $v'$ is a Markov control associated with
a measurable selector from the minimizer in \eqref{E-VI}
corresponding to the solution starting at $V_{0}'$ (see Remark~\ref{R4.3}).
It follows from \eqref{e-smonotone} and the fact that the support
of the transition probabilities of the controlled process is the entire
space $\RR^{d}$ that if $V_{0}\prec V_{0}'$, then
$\Phi_{t}(V_{0})\prec\!\!\!\prec\Phi_{t}(V_{0}')$ for all $t>0$,
or in other words that the semiflow $\Phi$ is strongly monotone on
$\Cc_{\Lyap}(\RR^{d})\cap\Cc^{2}(\RR^{d})$.
As mentioned in Section~\ref{S-dyn} the semiflow is then strongly
order-preserving.
Since $\mathcal{E}$ is totally ordered
it follows by Theorem~\ref{T3.1} that
$\cH = \overline{\Int(\mathfrak{C})}$.

It remains to show that $\mathfrak{C}$ is closed.
Note that
\begin{align*}
\babs{\Phi_{t}(V_{0})(x) - \Phi_{t}(V_{0}')(x)}
&\le\sup_{U\in\Uadm}\;\babs{\Exp_{x}^{U}\left[V_{0}(X_{t})-V_{0}'(X_{t})\right]}
\\[5pt]
&\le \bnorm{V_{0}-V_{0}'}_{\Lyap}\;
\left(\sup_{U\in\Uadm}\;\Exp_{x}^{U}\left[\Lyap(X_{t})\right]\right)\,,
\quad t\ge0\,,
\end{align*}
Hence by \eqref{e-estlyap} we have
\begin{align*}
\bnorm{\Phi_{t}(V_{0}) - \Phi_{t}(V_{0}')}_{\Lyap} &\le
\left(\sup_{x\in\RR^{d}}\;\sup_{U\in\Uadm}\;
\frac{\Exp_{x}^{U}\left[\Lyap(X_{t})\right]}{\Lyap(x)} \right)
\bnorm{V_{0} - V_{0}'}_{\Lyap}\\[5pt]
&\le \left(1+\frac{c_{0}}{c_{1}}\right)
\bnorm{V_{0} - V_{0}'}_{\Lyap}\,,\quad t\ge0\,.
\end{align*}
This shows in particular that if $V_{0}^{n}$ is a Cauchy sequence of
convergent points in $\Cc_{\Lyap}(\RR^{d})\cap\Cc^{2}(\RR^{d})$,
then $f_{n}\df\omega(V_{0}^{n})$ converges
in $\Cc_{\Lyap}(\RR^{d})$ as $n\to\infty$.
Suppose that $V_{0}\in\mathfrak{C}^{c}$.
Since $\mathfrak{C}$ is dense, there exists $\{V_{0}^{n}\}\subset\mathfrak{C}$
such that $V_{0}^{n}\to V_{0}$ as $n\to\infty$.
Let $f\df\lim_{n\to\infty}\;\omega(V_{0}^{n})$.
Since $V_{0}\in\mathfrak{C}^{c}$, then
$\limsup_{t\to\infty}\;d(\Phi_{t}(V_{0}),f)>0$.
Moreover, since for some $T>0$ the set
$\{\Phi_{t}(V_{0}) : t>T\}$ is precompact
in $\Cc_{\Lyap}(\RR^{d})\cap\Cc^{2}(\RR^{d})$
there exists $f'\in\Cc_{\Lyap}(\RR^{d})\cap\Cc^{2}(\RR^{d})$,
$f\ne f'$ and a sequence $t'_{n}$ such that
$\Phi_{t'_{n}}(V_{0})\to f'$ as $n\to\infty$.
On the other hand, we can find a sequence $t_{n}$ such that
$\sup_{t>t_{n}}\;\bnorm{\Phi_{t}(V_{0}^{n}) - f}_{\Lyap}\to0$
in $\Cc_{\Lyap}(\RR^{d})\cap\Cc^{2}(\RR^{d})$ as $n\to\infty$.
Therefore, for some subsequence $n(k)\uparrow\infty$, we have
$\bnorm{\Phi_{t_{n(k)}}(V_{0}^{k}) - f}_{\Lyap}\to0$ as $k\to\infty$.
Therefore,
\begin{align*}
0 &< \bnorm{f' - f}_{\Lyap}\\[5pt]
&=\lim_{k\to\infty}\;
\bnorm{\Phi_{t_{n(k)}}(V_{0}) - \Phi_{t_{n(k)}}(V_{0}^{k})}_{\Lyap}\\[5pt]
&\le\left(1+\frac{c_{0}}{c_{1}}\right) \lim_{k\to\infty}\;
\bnorm{V_{0} - V_{0}^{k}}_{\Lyap}\\[5pt]
&= 0\,,
\end{align*}
yielding a contradiction.
Thus we have shown that all points of
$\Cc_{\Lyap}(\RR^{d})\cap\Cc^{2}(\RR^{d})$ are convergent and
the proof is complete.
\end{proof}

We are now ready for the proof of the main result.

{\em Proof of Theorem~\ref{Tmain}}.
If we define $g(t)\df V(t,x) - \Bar{V}(t,x)$, then by
\eqref{EL-VV02} we have
\begin{equation*}
g(t) = \int_{0}^{t} \E^{s-t}\left(\beta - \Bar{V}(s,0)\right)\,\D{s}
\end{equation*}
Since $\Bar{V}(t,x)\to V^{*}(x)+c$ as $t\to\infty$ for each $x\in\RR^{d}$
by Theorem~\ref{t01}, it follows that
$g(t)$ converges to $\beta-V^{*}(0)-c=\beta-c$ as $t\to\infty$.
Hence
\begin{equation*}
\lim_{t\to\infty}\;V(t,x)=\lim_{t\to\infty}\;[g(t)+\Bar{V}(t,x)]
= V^{*}(x)+\beta\qquad \forall x\in\RR^{d}\,,
\end{equation*}
and the proof is complete.
\endproof

\subsection{An alternate proof of Theorem~\ref{t01}}
Recall that $v^{*}$ is an optimal stationary Markov control.
Let $\imeas_{v^{*}}$ be the corresponding invariant probability distribution,
and let $X^{*}_{t}$, $t\in\RR$,
be a stationary solution  of \eqref{e-sde} under the
control $v^{*}$ such that the law of $X^{*}_{t}$ is $\imeas_{v^{*}}$
for all $t\in\RR$.
Let $\sF_{t}\df \sigma( X_{s} : -\infty<s<t)$ and
\begin{equation*}
\Psi(t,x)\df\Bar{V}(t,x)- V^{*}(x)\,.
\end{equation*}
By \eqref{e-t01.0} we have
\begin{equation*}
-\partial_{t} \Psi(t,x) + \Lg^{v^{*}}\Psi(t,x)\ge 0\,.
\end{equation*}
Therefore the process
\begin{equation*}
M_{t} \df \Psi(t,X^{*}_{-t})\,,\quad t\in[0,\infty)\,,
\end{equation*}
is a reverse $\bigl(\sF_{-t}\bigr)$-supermartingale.
Also, by \eqref{EL-bound} there exists a constant $C_{0}$
such that $\Exp\left[\abs{M_{t}}\right]\le C_{0}$, for all $t\in[0,\infty)$.
We argue by contradiction.
Suppose that $\Bar{V}(t,\,\cdot\,)-V^{*}(\,\cdot\,)$ does not converge
to a constant as $t\to\infty$.
Then, there must exist
constants $a<b$,
a ball $D\subset\RR^{d}$ and a pair of sequences $\{t_{n}\}\subset\RR_{+}$
and $\{x_{n}\}\subset{D}$, $n\in\NN$, such that
\begin{equation}\label{e-mart00}
\Psi(t_{2k-1},x_{2k-1}) \le a\,,\quad
\Psi(t_{2k},x_{2k}) \ge b\qquad \forall k\in\NN\,.
\end{equation}
Let $B_{r}(x)$ denote the open ball of radius $r$ centered at $x\in\RR^{d}$.
Since $\Bar{V}(\,\cdot\,,t)-V^{*}(\,\cdot\,)$ is uniformly equicontinuous on
any bounded domain, there exists $r>0$, such that if $x\in D$, then
\begin{equation}\label{e-mart01}
\abs{\Psi(t,x)-\Psi(t,y)}\le \frac{b-a}{4}\qquad\forall y\in B_{2r}(x)\,.
\end{equation}
Let $\mathscr{G}=\{G_{i} : 1\le i\le N\}$ be a finite open cover of $D$
with balls of radius $r$.
Since $\mathscr{G}$ is finite,
an infinite number
of terms of the sequences $\{x_{2k-1} : k\in\NN\}$
and $\{x_{2k} : k\in\NN\}$ lie in some elements $G'$ and $G''$ of $\mathscr{G}$,
respectively.
Dropping to a subsequence of $\{x_{k}\}$, which is also denoted as $\{x_{k}\}$,
it follows by \eqref{e-mart00}--\eqref{e-mart01} that
\begin{equation}\label{e-mart03}
\begin{aligned}
\Psi(t_{2k-1},x) &\le a'\df \frac{3a+b}{4}\qquad \forall x\in G'\\[5pt]
\Psi(t_{2k},x) &\ge b'\df \frac{a+3b}{4}\qquad \forall x\in G''
\end{aligned}
\end{equation}
for all $k=1,2,\dotsc$.
Without loss of generality we can also assume that the time sequence $t_{n}$
satisfies $t_{n+1}-t_{n}>\gamma_{0}>0$.
The convergence of the transition probability under the control $v^{*}$ implies
that
for some constant $\varepsilon_{0}>0$
\begin{equation}\label{e-mart04}
\Prob_{x}^{v^{*}}(X^{*}_{t}\in B_{r}(y))\ge\varepsilon_{0}
\qquad\forall x,y\in D\,,\quad \forall t>\gamma_{0}\,.
\end{equation}
It follows by \eqref{e-mart03}--\eqref{e-mart04} that
\begin{equation*}
\Prob^{v^{*}}_{\imeas_{v^{*}}}\left( \sum_{k\in\NN}
\Ind\bigl\{X^{*}_{t_{2k-1}}\in G'\bigr\} <\infty\right) =0\qquad
\text{and}\qquad
\Prob^{v^{*}}_{\imeas_{v^{*}}}\left( \sum_{k\in\NN}
\Ind\bigl\{X^{*}_{t_{2k}}\in G''\bigr\} <\infty\right) =0
\end{equation*}
Therefore,
\begin{equation}\label{e-mart05}
\Prob^{v^{*}}_{\imeas_{v^{*}}}\left( \sum_{k\in\NN}
\Ind\bigl\{X^{*}_{t_{2k-1}}\in G'\bigr\} =\infty\,,~
\sum_{k\in\NN}\Ind\bigl\{X^{*}_{t_{2k}}\in G''\bigr\} =\infty\right) =1\,.
\end{equation}
Therefore if $\nu$ is the number of upcrossings of $[a',b']$ by $M$
then \eqref{e-mart03} and \eqref{e-mart05} imply that
$\Prob^{v^{*}}_{\imeas_{v^{*}}}(\nu<\infty)=0$.
However by the reverse submartingale upcrossings inequality
$\Exp_{x}^{v^{*}}[\nu]<\infty$ which gives a contradiction
and the proof is complete.

\section{Conclusions}\label{S-concl}
We have proposed a nonlinear parabolic PDE that serves as a continuous time,
continuous state space analog of the relative value iteration scheme
for solving the ergodic dynamic programming equation in finite state problems.
This was done under a uniform stability condition in terms of an associated
Lyapunov function.

These results suggest several future directions:
\begin{remunerate}
\item
An important class of ergodic control problems is one wherein instability
is possible, but is heavily penalized by using a `near-monotone'
(see \cite[Chapter~3]{book} for a definition) running cost.
It would be both interesting and important to extend the above results
to this case as it covers several important applications.
\item
While the foregoing seems to extend easily to two-person zero-sum stochastic
differential games with ergodic payoffs, it would be of great interest
to do the same for interesting classes of non-cooperative games with ergodic
payoffs.
\item
Rate of convergence results, computational aspects, and convergence under
subgeometric ergodicity are also open issues.
\end{remunerate}

\def\cprime{$'$} \def\cprime{$'$} \def\cprime{$'$}
\providecommand{\bysame}{\leavevmode\hbox to3em{\hrulefill}\thinspace}
\providecommand{\MR}{\relax\ifhmode\unskip\space\fi MR }
\providecommand{\MRhref}[2]{%
  \href{http://www.ams.org/mathscinet-getitem?mr=#1}{#2}
}
\providecommand{\href}[2]{#2}

\newpage


\begin{thebibliography}{JFHL09}

\bibitem[ABB01]{Abounadi-01}
J.~Abounadi, D.~P. Bertsekas, and V.~S. Borkar, \emph{Learning algorithms for
  {M}arkov decision processes with average cost}, SIAM J. Control Optim.
  \textbf{40} (2001), no.~3, 681--698.

\bibitem[ABG11]{book}
A.~Arapostathis, V.~S. Borkar, and M.~K. Ghosh, \emph{Ergodic control of
  diffusion processes}, Encyclopedia of Mathematics and its Applications, vol.
  143, Cambridge University Press, Cambridge, 2011.

\bibitem[AF99]{Aviv-99}
Y.~Aviv and A.~Federgruen, \emph{The value iteration method for countable state
  {M}arkov decision processes}, Oper. Res. Lett. \textbf{24} (1999), no.~5,
  223--234.

\bibitem[Ber98]{Bertsekas-98}
D.~P. Bertsekas, \emph{A new value iteration method for the average cost
  dynamic programming problem}, SIAM J. Control Optim. \textbf{36} (1998),
  no.~2, 742--759.

\bibitem[BKR01]{Bogachev-01}
V.~I. Bogachev, N.~V. Krylov, and M.~R{\"o}ckner, \emph{On regularity of
  transition probabilities and invariant measures of singular diffusions under
  minimal conditions}, Comm. Partial Differential Equations \textbf{26} (2001),
  no.~11-12, 2037--2080.

\bibitem[DMT95]{Down-95}
D.~Down, S.~P. Meyn, and R.~L. Tweedie, \emph{Exponential and uniform
  ergodicity of {M}arkov processes}, Ann. Probab. \textbf{23} (1995), no.~4,
  1671--1691.

\bibitem[DY79]{DynYus-79}
E.~B. Dynkin and A.~A. Yushkevich, \emph{Controlled {M}arkov processes},
  Grundlehren der mathematischen Wissenschaften, vol. 235, Springer-Verlag, New
  York, 1979.

\bibitem[FR05]{Fort-05}
G.~Fort and G.~O. Roberts, \emph{Subgeometric ergodicity of strong {M}arkov
  processes}, Ann. Appl. Probab. \textbf{15} (2005), no.~2, 1565--1589.

\bibitem[GK96]{Gyongy-96}
I.~Gy{\"o}ngy and N.~Krylov, \emph{Existence of strong solutions for {I}t\^o's
  stochastic equations via approximations}, Probab. Theory Related Fields
  \textbf{105} (1996), no.~2, 143--158.

\bibitem[GS72]{GihSko72}
I.~I. Gihman and A.~V. Skorohod, \emph{Stochastic differential equations},
  Ergebnisse der Mathematik und ihrer Grenzgebiete, vol.~72, Springer-Verlag,
  Berlin, 1972.

\bibitem[Has60]{Hasm-60}
R.~Z. Has$'$minski\u{\i}, \emph{Ergodic properties of recurrent diffusion
  processes and stabilization of the solution of the {C}auchy problem for
  parabolic equations}, Theory Probab. Appl. \textbf{5} (1960), no.~2,
  179--196.

\bibitem[JFHL09]{Jasso-09}
H.~Jasso-Fuentes and O.~Hern{\'a}ndez-Lerma, \emph{Ergodic control, bias, and
  sensitive discount optimality for {M}arkov diffusion processes}, Stoch. Anal.
  Appl. \textbf{27} (2009), no.~2, 363--385.

\bibitem[Kry80]{Krylov}
N.~V. Krylov, \emph{Controlled diffusion processes}, Applications of
  Mathematics, vol.~14, Springer-Verlag, New York, 1980.

\bibitem[Kry08]{Krylov-08}
\bysame, \emph{Lectures on elliptic and parabolic equations in {S}obolev
  spaces}, Graduate Studies in Mathematics, vol.~96, American Mathematical
  Society, Providence, RI, 2008.

\bibitem[LSU67]{Lady}
O.~A. Lady{\v{z}}enskaja, V.~A. Solonnikov, and N.~N. Ural{\cprime}ceva,
  \emph{Linear and quasilinear equations of parabolic type}, Translated from
  the Russian by S. Smith. Translations of Mathematical Monographs, Vol. 23,
  American Mathematical Society, Providence, R.I., 1967.

\bibitem[Put94]{Puterman}
M.~L. Puterman, \emph{Markov decision processes: Discrete stochastic dynamic
  programming}, John Wiley \& Sons Inc., New York, 1994.

\bibitem[SLKJ10]{Shlakhter-10}
O.~Shlakhter, C.-G. Lee, D.~Khmelev, and N.~Jaber, \emph{Acceleration operators
  in the value iteration algorithms for {M}arkov decision processes}, Oper.
  Res. \textbf{58} (2010), no.~1, 193--202.

\bibitem[Smi95]{Smith-95}
H.~L. Smith, \emph{Monotone dynamical systems}, Mathematical Surveys and
  Monographs, vol.~41, American Mathematical Society, Providence, RI, 1995.

\bibitem[Sta99]{Stannat-99}
W.~Stannat, \emph{({N}onsymmetric) {D}irichlet operators on {$L\sp 1$}:
  existence, uniqueness and associated {M}arkov processes}, Ann. Scuola Norm.
  Sup. Pisa Cl. Sci. (4) \textbf{28} (1999), no.~1, 99--140.

\bibitem[Whi63]{White-63}
D.~J. White, \emph{Dynamic programming, {M}arkov chains, and the method of
  successive approximations}, J. Math. Anal. Appl. \textbf{6} (1963), 373--376.

\end{thebibliography}
\end{document}